\ifdefined\pdfoutput
  \pdfoutput=1
\fi
\documentclass[10pt]{article}
\usepackage[T1]{fontenc}
\usepackage{lmodern}
\usepackage[a4paper,margin=1in]{geometry}
\usepackage{amsmath,amssymb,amsthm,mathtools}
\usepackage[english]{babel}
\usepackage{microtype}
\usepackage[numbers,sort&compress]{natbib}
\usepackage{parskip}
\usepackage{xcolor}
\usepackage[colorlinks=true,linkcolor=blue!55!black,citecolor=blue!55!black,urlcolor=blue!55!black]{hyperref}
\usepackage[nameinlink,capitalise]{cleveref}
\newtheorem{theorem}{Theorem}[section]
\newtheorem{lemma}[theorem]{Lemma}
\newtheorem{proposition}[theorem]{Proposition}
\newtheorem{corollary}[theorem]{Corollary}
\theoremstyle{definition}
\newtheorem{remark}[theorem]{Remark}

\crefname{theorem}{Theorem}{Theorems}
\crefname{lemma}{Lemma}{Lemmas}
\crefname{proposition}{Proposition}{Propositions}
\crefname{corollary}{Corollary}{Corollaries}
\crefname{remark}{Remark}{Remarks}
\crefname{section}{\S}{Sections}

\AddToHook{env/theorem/begin}{\crefalias{section}{theorem}}
\AddToHook{env/lemma/begin}{\crefalias{theorem}{lemma}}
\AddToHook{env/proposition/begin}{\crefalias{theorem}{proposition}}
\AddToHook{env/corollary/begin}{\crefalias{theorem}{corollary}}
\AddToHook{env/remark/begin}{\crefalias{theorem}{remark}}

\newcommand{\Z}{\mathbb Z}
\newcommand{\rad}{\operatorname{rad}}
\newcommand{\lpf}{\operatorname{lpf}}
\newcommand{\round}{\operatorname{round}}
\newcommand{\OddRound}{\operatorname{OddRound}}
\newcommand{\EvenRound}{\operatorname{EvenRound}}
\newcommand{\NextPrime}{\operatorname{NextPrime}}
\newcommand{\Fingerprint}{\mathcal F}
\newcommand{\Scale}{\mathcal S}

\title{Cyclotomic Prime Extractors}
\author{Joseph M. Shunia\thanks{Independent researcher, Ann Arbor, Michigan, United States. Email: \texttt{jshunia@gmail.com}.}}
\date{September 2026}

\hypersetup{
  pdftitle={Cyclotomic Prime Extractors},
  pdfauthor={Joseph M. Shunia},
  pdfsubject={Local and Archimedean prime extraction from cyclotomic values},
  pdfkeywords={cyclotomic polynomials, p-adic valuation, radical, least prime factor, prime successor, Mellin transform}
}

\begin{document}
\maketitle

\begin{abstract} \noindent
We develop explicit prime-recovery formulas from the values $\Phi_n(2)$ of cyclotomic polynomials.
Binary divisibility patterns detect repeated prime factors and identify the least prime divisor of a squarefree index,
while small corrections to $\log_2\Phi_n(2)$ allow successive recovery of the distinct prime factors.
Specializing the index gives identities for prime products and the least prime above a given integer.
\\[2mm]
The same mechanism extends from finite factorizations to an infinite prime sequence: a normalized limit of cyclotomic
values along the odd primorials defines a real constant $\Omega=0.25061403238015047218\ldots$, from which every odd
prime can be recovered by a recursive rounding rule.
\end{abstract}

\begingroup\small
\noindent\textbf{2020 Mathematics Subject Classification:}
11A25 (primary), 11B83, 11A41 (secondary).\par
\noindent\textbf{Keywords:} cyclotomic polynomials, $2$-adic valuation,
radical, least prime factor, prime successor, Mellin transform.\par
\endgroup

\section{Introduction}
\label{sec:introduction}

Which parts of the factorization of $n$ can be read directly from the integer
$\Phi_n(2)$? A small example suggests that very elementary observations may
suffice. The value
\[
\Phi_{15}(2)=151=(10010111)_2
\]
ends in three binary ones, and $3$ is the least prime factor of $15$.
Equivalently, $151+1=2^3\cdot19$. The length of this final run is not an
accident: for every squarefree index $n>1$, it is exactly the least prime
factor.
The neighboring integer $\Phi_n(2)-1$ records a different feature of the
index, namely the part left after removing one copy of each prime divisor.

To state these observations, write $\rad(n)$ for the product of the distinct
prime divisors of $n$, let $\lpf(n)$ be the least prime factor of $n>1$, and
let $\nu_2$ denote the $2$-adic valuation. The following identities are the
starting point of the paper.

\begin{theorem}[Cyclotomic prime extractors]
\label{thm:main-intro}
For every integer $n>1$,
\begin{equation}
\label{eq:intro-minus}
\nu_2\!\left(\Phi_n(2)-1\right)=\frac{n}{\rad(n)},
\end{equation}
and
\begin{equation}
\label{eq:intro-plus}
\nu_2\!\left(\Phi_n(2)+1\right)=
\begin{cases}
\lpf(n),&n\text{ squarefree},\\
1,&n\text{ nonsquarefree}.
\end{cases}
\end{equation}
\end{theorem}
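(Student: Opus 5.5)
The plan is to reduce everything to a squarefree index and then read off low-order $2$-adic digits from the Möbius product formula, expanded as a power series. Write $m=\rad(n)$ and $k=n/m$. I will use the standard identity $\Phi_n(x)=\Phi_m(x^{k})$, so that $\Phi_n(2)=\Phi_m(y)$ with $y=2^{k}$. For squarefree $m>1$ I also use
\[
\Phi_m(y)=\prod_{d\mid m}(1-y^d)^{\mu(m/d)}.
\]
This holds because the signs $(-1)^{\mu(m/d)}$ multiply to $(-1)^{\sum_{d\mid m}\mu(d)}=1$. I read it as an identity in $\Z[[y]]$; all factors $(1-y^d)^{\pm1}$ have integer coefficients, and the left side is a polynomial. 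Consequently, any truncation of the series gives a congruence of integers when we substitute $y=2^{k}$.

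\textbf{First identity.} Modulo $y^2$, only the factor with $d=1$ matters, so $\Phi_m(y)\equiv(1-y)^{\mu(m)}\equiv 1-\mu(m)y \pmod{y^2}$. Hence $\Phi_m(y)-1=-\mu(m)y+y^2Q(y)$ with $Q\in\Z[y]$. Since $\mu(m)=\pm1$ is odd and $y^2=2^{2k}$ with $2k>k\ge1$, we get $\nu_2(\Phi_n(2)-1)=k=n/\rad(n)$.

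\textbf{Nonsquarefree case of the second identity.} The same expansion gives $\Phi_n(2)+1=2-\mu(m)y+y^2Q(y)$. If $n$ is not squarefree then $k\ge2$, so $4\mid y$ and $\Phi_n(2)+1\equiv2\pmod 4$. This gives valuation $1$.

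\textbf{Squarefree case.} This is the main step. Here $k=1$ and $y=2$, so all terms are of the same $2$-adic size and we must track the expansion up to order $y^{p}$, where $p=\lpf(m)$. The divisors $d\mid m$ with $1<d\le p$ reduce to $d=p$ alone. Therefore
\[
\Phi_m(y)\equiv(1-y)^{\mu(m)}(1-y^p)^{\mu(m/p)} \pmod{y^{p+1}},
\]
and I will split according to the sign of $\mu(m)$, using $\mu(m/p)=-\mu(m)$.

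If $\mu(m)=-1$, the product is $(1+y+\dots+y^p)(1-y^p)$, which is congruent to $1+y+\dots+y^{p-1}$ modulo $y^{p+1}$; the $y^p$ coefficient cancels. Substituting $y=2$ gives $\Phi_m(2)\equiv2^p-1\pmod{2^{p+1}}$.

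If $\mu(m)=1$, the product is $(1-y)(1+y^p)$, which is congruent to $1-y+y^p$ modulo $y^{p+1}$. Substituting $y=2$ gives $\Phi_m(2)\equiv 2^p-1\pmod{2^{p+1}}$ again.

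In both cases $\Phi_m(2)+1\equiv2^p\pmod{2^{p+1}}$, so $\nu_2(\Phi_n(2)+1)=p=\lpf(n)$.

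The one point needing care is justifying the passage from power-series congruences modulo $y^{p+1}$ to integer congruences modulo $2^{p+1}$. This works because the difference of $\Phi_m(y)$ and the truncated product is $y^{p+1}$ times a power series in $\Z[[y]]$. That power series is in fact a polynomial, since both $\Phi_m(y)$ and the truncation are polynomials. I will also check the edge case $m=p$ prime, including $m=2$ where $\Phi_2(2)+1=4$; the formula covers it.
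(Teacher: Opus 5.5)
Your proof is correct and follows the paper's argument. Both use radical reduction plus the first-term expansion $\Phi_m(y)=1-\mu(m)y+y^2Q(y)$ for the minus identity and the nonsquarefree case, and for squarefree $n$ both observe that only the divisors $1$ and $p=\lpf(n)$ in the M\"obius product matter modulo the $(p+1)$st power. The only difference is bookkeeping: you truncate in $\Z[[y]]$ modulo $y^{p+1}$ and split on the sign of $\mu(m)$, whereas the paper evaluates at $2$ first and works in the odd units modulo $2^{p+1}$, where the $d=1$ factor is $2-1=1$, the factors with $d>p$ are $\equiv-1$ with total exponent $0$, and $(2^p-1)^2\equiv1$ handles both signs at once.
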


Thus two binary patterns distinguish repeated prime powers from
squarefree prime support. For an odd integer $C=\Phi_n(2)$,
$\nu_2(C-1)$ is the position of the first nonzero binary digit above the units
digit, whose position is $0$, while $\nu_2(C+1)$ counts the trailing ones.
For $n=15$, these valuations are $1$ and $3$; for $n=12$, the value
$\Phi_{12}(2)=13=(1101)_2$ returns $2$ and $1$, detecting the repeated factor.
Once the radical has been recovered, successive applications of the plus
identity to squarefree quotients list its prime divisors in increasing order.

The proofs explain why these features of the value retain information about
the index. Put $r=\rad(n)$ and $a=n/r$. The classical reduction
$\Phi_n(X)=\Phi_r(X^a)$ separates the multiplicities from the distinct prime
divisors, and $\Phi_r(Y)$ begins with $1-\mu(r)Y$. At $X=2$, this first term
forces the valuation in \eqref{eq:intro-minus}. The plus identity uses a
second feature of the M\"obius product: the divisor $1$ contributes the
factor $2-1=1$, leaving the least prime divisor as the first relevant factor
modulo a suitable power of $2$.

An extractor for a given index becomes a prime-sequence identity when the
index is chosen to have a useful prime support. For example, the prime
divisors of $n!$ are precisely the primes at most $n$, so
\begin{equation}
\label{eq:intro-primorial}
\prod_{p\le n}p
=\frac{n!}{\nu_2(\Phi_{n!}(2)-1)}
\qquad(n\ge2).
\end{equation}
To move beyond $n$, use the central-binomial quotient
\begin{equation}
\label{eq:intro-Rn}
R_n:=\frac{\binom{2n}{n}}
{\gcd\!\left(\binom{2n}{n},(n!)^2\right)}
=\prod_{n<p\le2n}p,
\end{equation}
introduced in \cite{ShuniaPrimeInterval}. Its least prime divisor is the
least prime strictly greater than $n$, denoted by $\NextPrime(n)$. Hence
\begin{equation}
\label{eq:intro-next}
\NextPrime(n)=\nu_2\!\left(\Phi_{R_n}(2)+1\right)
\qquad(n\ge1).
\end{equation}
In particular, the next prime can be expressed in terms of the current prime
value, without its position in the prime sequence.

There is a second way to read the same cyclotomic value. Factoring the leading
powers of $2$ from the M\"obius product gives
\begin{equation}
\label{eq:intro-log}
\log_2\Phi_n(2)
=\varphi(n)+
\sum_{d\mid r}\mu(r/d)\log_2(1-2^{-ad}).
\end{equation}
The integer $\varphi(n)$ describes the main size of the value. The correction
contains the prime support in a more delicate form: its terms decrease
exponentially with $ad$. After the contribution of the already recovered
primes is removed, the next prime supplies the largest remaining term.
The issue is whether all later terms together can change the answer obtained
by rounding its logarithmic scale. We prove that they cannot, and obtain
explicit reconstruction rules for nonsquarefree, odd squarefree, and even
squarefree indices. The local exponent $a$ in \eqref{eq:intro-minus} is thus
also the first real correction scale in \eqref{eq:intro-log}.

The estimates for odd squarefree indices depend on the next missing prime,
not on the number of primes still to come. This permits a further step:
letting the index range through the odd primorials produces a convergent
logarithmic constant $\Omega$, while preserving the rounding rule. The same
subtraction procedure therefore recovers every odd prime from the limit.
Explicit tail bounds quantify both its convergence and the precision needed
to decode it. Finally, replacing $2^{-1}$ by $e^{-t}$ gives a logarithmic
family. Its Mellin transform turns the integer dilations into an odd
M\"obius Dirichlet series, yielding the zeta quotient in
\cref{prop:Mellin}.

The point of these formulas is the explicit recovery of arithmetic data from
particular features of a cyclotomic value. They do not give efficient
factorization or prime-generation algorithms: the integer values can be
enormous, and the real formulas require enough precision to resolve very
small residuals. We keep this distinction explicit when discussing the
reconstruction procedures.

The M\"obius product, radical reduction, and logarithmic normalization are
classical. Pomerance and Rubinstein-Salzedo give the radical reduction,
logarithmic dominance estimates, and the first-gap formula in
\cite[Proposition~2.5, Lemma~4.1, Theorem~4.2, and
Proposition~6.4]{PomeranceRubinsteinSalzedo}; their results also show that
$\Phi_n(2)$ determines $n$ except for the coincidence
$\Phi_2(2)=\Phi_6(2)=3$. Our purpose here is to give the explicit valuation
and rounding rules, their prime-sequence specializations, and the passage
from finite reconstruction to a decodable limit. To our knowledge, the paired
valuation identities and these explicit peeling formulas have not previously
appeared in this form. General background is available in
\cite{Apostol,Washington}, and coefficient formulas are surveyed in
\cite{HerreraMoree}.

Sections~\ref{sec:preliminaries} and~\ref{sec:local} establish the polynomial
and local identities. Section~\ref{sec:successor} applies them to prime sets.
Section~\ref{sec:logarithmic} develops real reconstruction, and
Section~\ref{sec:infinite-prime-constant} passes to the infinite limit and its
Mellin family.

\section{Radical reduction and the first cyclotomic scale}
\label{sec:preliminaries}

We write $\mu$ for the M\"obius function, $\varphi$ for Euler's totient
function, and
\[
\rad(n)=\prod_{p\mid n}p.
\]
For $n>1$, let $\lpf(n)$ be the least prime factor of $n$. For a prime
$\ell$ and a nonzero integer $m$, $\nu_\ell(m)$ denotes the exponent of
$\ell$ in $|m|$, normalized by $\nu_\ell(\ell)=1$. Unadorned logarithms are
natural logarithms.

The first step is to separate the two pieces of the index: its distinct prime
divisors and their multiplicities. Radical reduction puts the first into a
squarefree cyclotomic polynomial and the second into the exponent of its
argument. We recall the short proof, since this separation will be used in
both the local and the real arguments.

\begin{proposition}[M\"obius product and radical reduction]
\label{prop:radical-reduction}
For every $n>1$,
\begin{equation}
\label{eq:mobius-product}
\Phi_n(X)=\prod_{d\mid n}(X^d-1)^{\mu(n/d)}
\end{equation}
and
\begin{equation}
\label{eq:radical-reduction}
\Phi_n(X)
=
\Phi_{\rad(n)}\!\left(X^{n/\rad(n)}\right).
\end{equation}
\end{proposition}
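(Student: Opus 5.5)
The plan is to start from the defining factorization $X^m-1=\prod_{d\mid m}\Phi_d(X)$, valid for every $m\ge1$. This is immediate from grouping the $m$th roots of unity by their exact order. Taking this as an identity in the field $\mathbb{Q}(X)$ of rational functions, Möbius inversion applies to the multiplicative form. Concretely, take formal logarithms, or apply the multiplicative Möbius inversion formula to the function $d\mapsto X^d-1$. This gives \eqref{eq:mobius-product} directly:
\[
\Phi_n(X)=\prod_{d\mid n}(X^d-1)^{\mu(n/d)}.
\]
This first part is routine.

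For \eqref{eq:radical-reduction}, write $r=\rad(n)$ and $a=n/r$. We apply \eqref{eq:mobius-product} to $r$ and substitute $X^a$ for the variable, which gives
\[
\Phi_r(X^a)=\prod_{e\mid r}(X^{ae}-1)^{\mu(r/e)}.
\]
Next we compare this with the Möbius product for $n$. For $d\mid n$, the factor $\mu(n/d)$ vanishes unless $n/d$ is squarefree. Since every prime dividing $n$ divides $r$, a quotient $n/d$ is squarefree exactly when $n/d$ divides $r$. Equivalently, $d=n/k$ with $k\mid r$, so that $d=a\cdot(r/k)$. Setting $e=r/k$, the divisors $d$ contributing nontrivially are precisely $d=ae$ with $e\mid r$. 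For these, $n/d=r/e$, so the exponents agree.

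Hence the two products coincide term by term, and \eqref{eq:radical-reduction} follows. The only step needing care is this bijection between the squarefree cofactors $n/d$ and the divisors of $r$. One must check that $n/d\mid r$ is equivalent to $n/d$ being squarefree and dividing $n$; this holds because $r$ is the largest squarefree divisor of $n$. I do not expect any genuine obstacle. As an alternative check, one could compare roots: the primitive $n$th roots of unity are exactly the $a$th roots of the primitive $r$th roots. The degrees match because $\varphi(n)=a\varphi(r)$ and both polynomials are monic, so the identity follows again.
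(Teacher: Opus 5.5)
Your proof is correct and follows the paper's argument: Möbius inversion of $X^n-1=\prod_{d\mid n}\Phi_d(X)$, then the observation that $\mu(n/d)\neq0$ forces $d=ae$ with $e\mid r$ and $n/d=r/e$, which reindexes the product as $\Phi_r(X^a)$. One small point on your optional root-comparison check: matching degrees and monicity are not enough on their own. You also need that $\Phi_r(X^a)=\prod_\omega(X^a-\omega)$ has distinct roots, which holds because the factors $X^a-\omega$ have simple and pairwise disjoint roots.
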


\begin{proof}
M\"obius inversion applied to
$X^n-1=\prod_{d\mid n}\Phi_d(X)$ gives \eqref{eq:mobius-product}. If
$\mu(n/d)\ne0$, then $n/d$ is squarefree, so $n/\rad(n)$ divides $d$.
Writing $d=(n/\rad(n))e$ in the nonzero factors gives
\[
\Phi_n(X)
=
\prod_{e\mid\rad(n)}
\left(X^{en/\rad(n)}-1\right)^{\mu(\rad(n)/e)},
\]
which is \eqref{eq:radical-reduction}.
\end{proof}

\begin{lemma}[First term at the origin]
\label{lem:first-term}
If $r>1$ is squarefree, then for some $H_r(Y)\in\Z[Y]$,
\begin{equation}
\label{eq:first-term}
\Phi_r(Y)=1-\mu(r)Y+Y^2H_r(Y).
\end{equation}
\end{lemma}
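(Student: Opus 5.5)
We read the statement off the M\"obius product \eqref{eq:mobius-product}, working in $\Z[[Y]]$ modulo $Y^2$. Since $\Phi_r(Y)\in\Z[Y]$, it suffices to show two things: $\Phi_r(0)=1$, and the coefficient of $Y$ in $\Phi_r(Y)$ equals $-\mu(r)$. Then $\Phi_r(Y)-1+\mu(r)Y$ is an integer polynomial divisible by $Y^2$, and the quotient is the required $H_r(Y)\in\Z[Y]$.

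First we rewrite each factor as $Y^d-1=-(1-Y^d)$. Applying \eqref{eq:mobius-product} with $n=r$ gives
\[
\Phi_r(Y)=(-1)^{\sum_{d\mid r}\mu(r/d)}\prod_{d\mid r}(1-Y^d)^{\mu(r/d)}.
\]
Because $r>1$, we have $\sum_{d\mid r}\mu(r/d)=\sum_{e\mid r}\mu(e)=0$, so the sign is $+1$. Each factor $(1-Y^d)^{\pm1}$ is a unit in $\Z[[Y]]$ with constant term $1$. This yields the identity $\Phi_r(Y)=\prod_{d\mid r}(1-Y^d)^{\mu(r/d)}$ in $\Z[[Y]]$, and in particular $\Phi_r(0)=1$.

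Next we reduce modulo $Y^2$. Every factor with $d\ge2$ satisfies $(1-Y^d)^{\pm1}\equiv1\pmod{Y^2}$. The only factor that matters is the one with $d=1$, namely $(1-Y)^{\mu(r)}$. Since $r$ is squarefree and $r>1$, we have $\mu(r)=\pm1$. In either case $(1-Y)^{\mu(r)}\equiv1-\mu(r)Y\pmod{Y^2}$: for $\mu(r)=-1$ we use $(1-Y)^{-1}=1+Y+Y^2+\cdots$. Hence $\Phi_r(Y)\equiv1-\mu(r)Y\pmod{Y^2}$ in $\Z[[Y]]$. Because $\Phi_r(Y)$ is a polynomial, this congruence determines its first two coefficients, and the lemma follows.

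The only step needing care is the sign bookkeeping in passing from $Y^d-1$ to $1-Y^d$, which depends on $r>1$. It is also worth noting where squarefreeness enters: it is not needed for the congruence itself, but for a non-squarefree $r$ we would have $\mu(r)=0$ and the statement would read differently. Squarefreeness guarantees $\mu(r)=\pm1$, which makes the expansion of $(1-Y)^{\mu(r)}$ explicit.
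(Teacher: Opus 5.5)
Your proof is correct and follows the paper's own argument: rewrite the M\"obius product with factors $1-Y^d$ (the sign is $+1$ because $\sum_{d\mid r}\mu(r/d)=0$ for $r>1$), observe that modulo $Y^2$ only the $d=1$ factor contributes, and use that $\Phi_r(Y)\in\Z[Y]$. Your explicit sign bookkeeping and your treatment of the case $\mu(r)=-1$ spell out steps the paper leaves implicit.
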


\begin{proof}
Because $r>1$, \eqref{eq:mobius-product} may be written
\[
\Phi_r(Y)=\prod_{d\mid r}(1-Y^d)^{\mu(r/d)}.
\]
Modulo $Y^2$, all factors with $d>1$ are $1$, while the factor with
$d=1$ is $1-\mu(r)Y$. Since $\Phi_r(Y)$ is a polynomial over $\Z$, the
claimed form follows.
\end{proof}

\section{Paired local extractors}
\label{sec:local}

Subtracting the constant term leaves a polynomial whose lowest power is
$X^{n/\rad(n)}$ and whose coefficient there is a unit. At a prime dividing
the argument, higher powers cannot cancel this term. The first local identity
therefore holds more generally than evaluation at $2$.

\begin{theorem}[Local radical extractor]
\label{thm:local-radical}
Let $n>1$, let $\ell$ be prime, and let $x\in\Z\setminus\{0\}$ with
$\ell\mid x$. Then
\begin{equation}
\label{eq:local-radical}
\nu_\ell\!\left(\Phi_n(x)-1\right)
=
\frac{n}{\rad(n)}\nu_\ell(x).
\end{equation}
\end{theorem}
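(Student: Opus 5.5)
We will combine the radical reduction of \cref{prop:radical-reduction} with the expansion at the origin from \cref{lem:first-term}. Put $r=\rad(n)$ and $a=n/r$. Since $n>1$, $r>1$ is squarefree and $\mu(r)=\pm1$. By \eqref{eq:radical-reduction}, $\Phi_n(x)=\Phi_r(y)$ with $y=x^a$. Then \eqref{eq:first-term} gives the integer identity
\[
\Phi_n(x)-1=-\mu(r)\,y+y^2H_r(y)=y\bigl(-\mu(r)+yH_r(y)\bigr),
\qquad y=x^{a}.
\]

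Next we compute the $\ell$-adic valuation of each factor. Because $x\ne0$, we have $y\ne0$ and $\nu_\ell(y)=a\,\nu_\ell(x)$. The hypothesis $\ell\mid x$ gives $\ell\mid y$. Since $H_r(Y)\in\Z[Y]$, the number $yH_r(y)$ is an integer divisible by $\ell$. Hence
\[
-\mu(r)+yH_r(y)\equiv-\mu(r)=\mp1\pmod{\ell}.
\]
This second factor is therefore a nonzero integer prime to $\ell$, so its valuation is $0$. It follows that $\Phi_n(x)-1\neq0$, and additivity of $\nu_\ell$ gives
\[
\nu_\ell(\Phi_n(x)-1)=a\,\nu_\ell(x)=\frac{n}{\rad(n)}\nu_\ell(x).
\]

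There is no serious obstacle; the argument needs only two pieces of bookkeeping. First, we must ensure $\Phi_n(x)-1\ne0$, so that the valuation is defined. This follows because the cofactor is a unit modulo $\ell$ and $y\ne0$. Second, we use $r>1$ to apply \cref{lem:first-term}. This requires $n>1$, which is exactly the hypothesis. Note that we use only $\mu(r)\ne0$, and the case $\ell=2$ needs no special handling.
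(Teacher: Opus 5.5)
Your proof is correct and follows the paper's argument: radical reduction together with \cref{lem:first-term} factors $\Phi_n(x)-1$ as $x^a\bigl(-\mu(r)+x^aH_r(x^a)\bigr)$, and the second factor is an $\ell$-adic unit. You also check explicitly that $\Phi_n(x)-1\neq0$, which the paper leaves implicit.
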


\begin{proof}
Applying \cref{prop:radical-reduction,lem:first-term} with
$r=\rad(n)$ and $a=n/\rad(n)$ gives
\[
\Phi_n(x)-1
=
x^a\bigl(-\mu(r)+x^aH_r(x^a)\bigr).
\]
The factor in parentheses is an $\ell$-adic unit, so its valuation is zero.
\end{proof}

Taking $x=2$ gives the first extractor.

\begin{corollary}[The $2$-adic radical identity]
\label{cor:minus}
For every $n>1$,
\begin{equation}
\label{eq:minus}
\nu_2\!\left(\Phi_n(2)-1\right)=\frac{n}{\rad(n)}.
\end{equation}
Consequently,
\begin{equation}
\label{eq:radical-formula}
\rad(n)=\frac{n}{\nu_2(\Phi_n(2)-1)}.
\end{equation}
\end{corollary}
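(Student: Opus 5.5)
This is a direct specialization of \cref{thm:local-radical}, so the plan is simply to choose the parameters there and check the hypotheses. I take $\ell=2$ and $x=2$. Then $x\in\Z\setminus\{0\}$, $\ell\mid x$, and $\nu_2(2)=1$. For any $n>1$, \eqref{eq:local-radical} then gives
\[
\nu_2\bigl(\Phi_n(2)-1\bigr)=\frac{n}{\rad(n)}\cdot 1,
\]
which is \eqref{eq:minus}.

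One point needs a check first: $\nu_2$ is only defined for nonzero arguments, so I must confirm that $\Phi_n(2)\ne1$. This is already contained in the proof of \cref{thm:local-radical}. There $\Phi_n(2)-1$ is factored as $2^a$ times a $2$-adic unit, with $a=n/\rad(n)$. A $2$-adic unit is odd, hence nonzero, so the product is nonzero.

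For \eqref{eq:radical-formula}, I note that $n/\rad(n)$ is a positive integer, because $\rad(n)\mid n$. Solving \eqref{eq:minus} for $\rad(n)$ gives
\[
\rad(n)=\frac{n}{\nu_2\bigl(\Phi_n(2)-1\bigr)}.
\]
No step here is a real obstacle; all of the content sits in \cref{prop:radical-reduction}, \cref{lem:first-term} and \cref{thm:local-radical}.
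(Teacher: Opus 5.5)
Your proposal is correct and matches the paper, which obtains the corollary by setting $x=2$ in \cref{thm:local-radical} and then rearranging. Your check that $\Phi_n(2)-1\ne0$ is implicit in the paper, where the factor $-\mu(r)+2^aH_r(2^a)$ is odd; it is a harmless and reasonable addition.
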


A factorial provides an immediate application: its radical retains one copy
of every prime at most the factorial's argument. Thus the valuation formula
also expresses a primorial without an explicit product over primes.

\begin{corollary}[Primorial identity]
\label{cor:primorial}
For every $n\ge2$,
\begin{equation}
\label{eq:primorial}
\prod_{p\le n}p
=
\frac{n!}{\nu_2(\Phi_{n!}(2)-1)}.
\end{equation}
\end{corollary}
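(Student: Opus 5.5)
This is a direct specialization of the radical formula \eqref{eq:radical-formula} in \cref{cor:minus}, applied at the index $n!$. Since $n\ge2$, we have $n!>1$, so the corollary applies and gives
\[
\rad(n!)=\frac{n!}{\nu_2(\Phi_{n!}(2)-1)}.
\]
It then remains only to identify $\rad(n!)$ with the primorial $\prod_{p\le n}p$.

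To do this, I will check that the set of primes dividing $n!$ is exactly the set of primes $p\le n$.
\begin{itemize}
\item If $p\le n$ is prime, then $p$ appears as one of the factors $1,2,\dots,n$, so $p\mid n!$.
\item Conversely, if a prime $p$ divides $n!=1\cdot2\cdots n$, then by Euclid's lemma it divides some factor $k$ with $1\le k\le n$. This forces $p\le k\le n$.
\end{itemize}
By the definition of the radical as the product of the distinct prime divisors, $\rad(n!)=\prod_{p\le n}p$. Substituting this into the display above yields \eqref{eq:primorial}.

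There is no real obstacle here. The only point to verify is the hypothesis $n!>1$ of \cref{cor:minus}, which is why the statement requires $n\ge2$.
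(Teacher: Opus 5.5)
Your proof is correct and follows the paper's argument exactly: apply \eqref{eq:radical-formula} to the index $n!$ (valid since $n!>1$), then identify $\rad(n!)$ with $\prod_{p\le n}p$. Your Euclid's-lemma check of the prime divisors of $n!$ just spells out the one fact the paper uses without proof.
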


\begin{proof}
Apply \eqref{eq:radical-formula} to $n!$ and use
$\rad(n!)=\prod_{p\le n}p$.
\end{proof}

For instance, $6!=720$ has radical $2\cdot3\cdot5=30$, so
\[
\nu_2(\Phi_{720}(2)-1)=24
\]
and \eqref{eq:primorial} returns $720/24=30$.

For a squarefree index, the minus valuation is always $1$, so it does not
distinguish the prime divisors. To find the first one, consider
$\Phi_n(2)+1$. Here the choice of $2$ matters: in the M\"obius product the
factor associated with the divisor $1$ is $2-1=1$. The first remaining
divisor is the least prime, and reduction modulo $2^{p+1}$ isolates its
contribution.

\begin{theorem}[Least-prime and squarefreeness extractor]
\label{thm:plus}
Let $n>1$. If $n$ is nonsquarefree, then
\begin{equation}
\label{eq:plus-nonsquare-congruence}
\Phi_n(2)\equiv1\pmod4.
\end{equation}
If $n$ is squarefree and $p=\lpf(n)$, then
\begin{equation}
\label{eq:plus-square-congruence}
\Phi_n(2)\equiv2^p-1\pmod{2^{p+1}}.
\end{equation}
Hence
\begin{equation}
\label{eq:plus}
\nu_2\!\left(\Phi_n(2)+1\right)
=
\begin{cases}
\lpf(n),&n\text{ squarefree},\\
1,&n\text{ nonsquarefree}.
\end{cases}
\end{equation}
\end{theorem}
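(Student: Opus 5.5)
The nonsquarefree case follows from the local radical extractor, and the squarefree case from the Möbius product read modulo a power of $2$.

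\emph{Nonsquarefree case.} Put $a=n/\rad(n)\ge2$. By \cref{cor:minus}, $\nu_2(\Phi_n(2)-1)=a\ge2$. This gives \eqref{eq:plus-nonsquare-congruence}, so $\Phi_n(2)+1\equiv2\pmod4$ and $\nu_2(\Phi_n(2)+1)=1$.

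\emph{Squarefree case.} Write $n=r$ and $p=\lpf(r)$, and set $N=2^{p+1}$. I would use \eqref{eq:mobius-product} in the form
\[
\Phi_r(2)=\prod_{d\mid r}(2^d-1)^{\mu(r/d)} ,
\]
reading it as an identity among odd integers, hence among units of $\Z/N\Z$. This is legitimate because every factor $2^d-1$ is odd, and the identity holds in $\mathbb{Q}$.

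The factor with $d=1$ equals $1$. Every divisor $d>1$ satisfies $d\ge p$. If $d\ge p+1$, then $2^d-1\equiv-1\pmod N$. If $d=p$, which occurs since $p\mid r$, the factor is $2^p-1$. Hence
\[
\Phi_r(2)\equiv(2^p-1)^{\mu(r/p)}\cdot(-1)^{E}\pmod N,
\qquad
E=\sum_{d\mid r,\;d>p}\mu(r/d).
\]

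I then compute the two pieces.

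\emph{The inverse of $2^p-1$.} We have $(2^p-1)^2=2^{2p}-2^{p+1}+1\equiv1\pmod N$, since $2p\ge p+1$. So $2^p-1$ is its own inverse modulo $N$, and the exponent $\mu(r/p)=\pm1$ does not matter.

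\emph{The sign $(-1)^E$.} Since $\sum_{d\mid r}\mu(r/d)=0$ for $r>1$, removing the terms $d=1$ and $d=p$ gives
\[
E=-\mu(r)-\mu(r/p)=-\mu(r)+\mu(r)=0,
\]
using $\mu(r/p)=-\mu(r)$ for squarefree $r$. This holds even when $r=p$, where the sum defining $E$ is empty.

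Therefore $\Phi_r(2)\equiv2^p-1\pmod{2^{p+1}}$, which is \eqref{eq:plus-square-congruence}. It follows that $\Phi_r(2)+1\equiv2^p\pmod{2^{p+1}}$, so $\nu_2(\Phi_r(2)+1)=p$.

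The only delicate points are treating the rational Möbius product modulo $N$ and checking the parity of $E$. The first is justified because all the factors are odd. The second reduces to the vanishing of the Möbius sum computed above.
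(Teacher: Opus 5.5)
Your proof is correct and follows the paper's argument essentially step for step. The nonsquarefree case uses the $2$-adic valuation $n/\rad(n)\ge2$; you cite \cref{cor:minus}, while the paper cites the underlying \cref{prop:radical-reduction,lem:first-term}. The squarefree case reads the M\"obius product in the odd units modulo $2^{p+1}$, with $2^p-1$ self-inverse and the exponent sum over divisors $d>p$ equal to $-\mu(n)-\mu(n/p)=0$.
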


\begin{proof}
If $n$ is nonsquarefree, then $n/\rad(n)\ge2$, and
\cref{prop:radical-reduction,lem:first-term} gives
$\Phi_n(2)\equiv1\pmod4$.

Now suppose that $n$ is squarefree and put $p=\lpf(n)$. In the group of odd
units modulo $2^{p+1}$, \eqref{eq:mobius-product} gives
\[
\Phi_n(2)=\prod_{d\mid n}(2^d-1)^{\mu(n/d)}.
\]
The only divisors of $n$ not exceeding $p$ are $1$ and $p$. For $d>p$,
$2^d-1\equiv-1\pmod{2^{p+1}}$, and
\[
\sum_{\substack{d\mid n\\d>p}}\mu(n/d)
=-\mu(n)-\mu(n/p)=0.
\]
The $d=1$ factor is $1$, while the $d=p$ factor is congruent to
$2^p-1$ even if its exponent is $-1$, because
$(2^p-1)^2\equiv1\pmod{2^{p+1}}$. This proves
\eqref{eq:plus-square-congruence}; both cases of \eqref{eq:plus} follow.
\end{proof}

\begin{corollary}[Squarefreeness, primality, and factor stripping]
\label{cor:squarefree-prime}
For every $n>1$,
\[
n\text{ is squarefree}
\iff
\nu_2(\Phi_n(2)+1)>1,
\]
and
\[
n\text{ is prime}
\iff
\nu_2(\Phi_n(2)+1)=n.
\]
If $n$ is squarefree, repeatedly applying \eqref{eq:plus} and dividing the
index by the returned valuation lists all prime divisors of $n$ in increasing
order.
\end{corollary}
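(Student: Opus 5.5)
The plan is to read all three assertions off \eqref{eq:plus} in \cref{thm:plus}, using only the fact that $\lpf(n)\ge2$ for every $n>1$.

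\textbf{Squarefreeness.} If $n$ is squarefree, then \eqref{eq:plus} gives $\nu_2(\Phi_n(2)+1)=\lpf(n)\ge2>1$. If $n$ is nonsquarefree, the same identity gives the value $1$. Since every $n>1$ falls into exactly one of these cases, the equivalence follows.

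\textbf{Primality.} If $n$ is prime, it is squarefree with $\lpf(n)=n$, so \eqref{eq:plus} returns $n$. Conversely, suppose $\nu_2(\Phi_n(2)+1)=n$. Since $n\ge2>1$, the squarefreeness criterion shows that $n$ is squarefree. Then \eqref{eq:plus} gives $\lpf(n)=n$, and an integer $n>1$ whose least prime factor is itself is prime.

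\textbf{Factor stripping.} I would argue by strong induction on the number $k$ of prime divisors of a squarefree $n>1$. Write $n=p_1<\dots$ as $n=p_1\cdots p_k$ with $p_1<\cdots<p_k$. By \eqref{eq:plus}, the first application returns $p_1$, and the quotient $n/p_1=p_2\cdots p_k$ is again squarefree.

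If $k=1$, the quotient is $1$ and the procedure stops. Here I will state explicitly that the iteration halts once the index reaches $1$, because \eqref{eq:plus} requires $n>1$. If $k>1$, the quotient is a squarefree integer greater than $1$ whose least prime is $p_2$. The induction hypothesis then lists $p_2<\cdots<p_k$, so the full output is $p_1<p_2<\cdots<p_k$ in increasing order.

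The only delicate point is this stopping condition, which keeps the procedure within the range $n>1$ where \cref{thm:plus} applies. Otherwise the argument is bookkeeping on top of \cref{thm:plus}, with no new estimates required.
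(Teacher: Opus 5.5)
Your proposal is correct and takes the same route as the paper. The paper also reads both equivalences directly off \eqref{eq:plus}, using $\lpf(n)\ge2$ in the squarefree case and the value $1<n$ in the nonsquarefree case, and it obtains the stripping statement by iteration; your strong induction and explicit halting at index $1$ simply spell out that iteration (the phrase ``Write $n=p_1<\dots$ as'' is a typo to clean up).
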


\begin{proof}
The first equivalence is immediate from \cref{thm:plus}. In the squarefree
case, the valuation is $\lpf(n)$, which equals $n$ exactly when $n$ is prime;
in the nonsquarefree case it is $1<n$. The stripping statement follows by
iteration.
\end{proof}

For an arbitrary index, \cref{cor:minus} first supplies its radical, and
\cref{cor:squarefree-prime} can then be applied to that squarefree index.
The remaining question is how to choose an index whose prime divisors are
exactly the primes we wish to recover.

\section{From extractors to the prime sequence}
\label{sec:successor}

\subsection{The least prime above an integer}

The central binomial coefficient contains every prime in $(n,2n]$, each
once, together with factors from smaller primes. The denominator in
\eqref{eq:intro-Rn} removes precisely those smaller factors. The following
identity is proved in \emph{Prime-Interval Algebras}
\cite[Proposition~2.1]{ShuniaPrimeInterval}.

\begin{lemma}[Central-binomial interval product]
\label{lem:Rn}
For every $n\ge1$,
\begin{equation}
\label{eq:Rn}
R_n
=
\frac{\binom{2n}{n}}
{\gcd\!\left(\binom{2n}{n},(n!)^2\right)}
=
\prod_{n<p\le2n}p.
\end{equation}
\end{lemma}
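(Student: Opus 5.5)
The approach is to compare $p$-adic valuations prime by prime. Write $B=\binom{2n}{n}$ and, for each prime $p$, set $\beta_p=\nu_p(B)$ and $\gamma_p=\nu_p((n!)^2)$. Then
\[
\nu_p\bigl(\gcd(B,(n!)^2)\bigr)=\min(\beta_p,\gamma_p),
\]
so $\nu_p(R_n)=\beta_p-\min(\beta_p,\gamma_p)=\max(0,\beta_p-\gamma_p)$. The identity \eqref{eq:Rn} is therefore equivalent to the statement that $\max(0,\beta_p-\gamma_p)$ equals $1$ when $n<p\le2n$ and equals $0$ otherwise. I will split into three ranges of $p$.

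\textbf{Primes $p>2n$.} Such a $p$ divides neither $(2n)!$ nor $n!$, so $\beta_p=\gamma_p=0$ and the contribution is $0$.

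\textbf{Primes $n<p\le2n$.} Here $p\nmid n!$, so $\gamma_p=0$. By Legendre's formula, $\nu_p((2n)!)=\lfloor 2n/p\rfloor+\lfloor 2n/p^2\rfloor+\cdots$. Since $p>n$ we have $2n<2p$, so the first term is $1$. For $n\ge1$ we have $p\ge2$, so $p^2\ge2p>2n$ and the higher terms vanish. Hence $\beta_p=1$ and the contribution is exactly $1$, as required.

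\textbf{Primes $p\le n$ (main step).} The contribution must vanish, i.e.\ I need $\beta_p\le\gamma_p=2\nu_p(n!)$. Legendre gives $\gamma_p=2\sum_{k\ge1}\lfloor n/p^k\rfloor$. By Kummer's bound, $\beta_p=\sum_{k\ge1}\bigl(\lfloor 2n/p^k\rfloor-2\lfloor n/p^k\rfloor\bigr)$, and each summand lies in $\{0,1\}$; the summands vanish once $p^k>2n$. So $\beta_p\le\#\{k\ge1:p^k\le2n\}$. On the other hand, $\gamma_p$ contains $2\lfloor n/p\rfloor\ge2$ from $k=1$, and for each $k\ge2$ with $p^k\le 2n$ a term $2\lfloor n/p^k\rfloor$. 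I would close the argument termwise: the $k$-th Kummer summand is at most $1$, and it is nonzero only if $p^k\le2n$. For $k=1$, $1\le2\lfloor n/p\rfloor$ holds. For $k\ge2$ with $p^k\le 2n$, the Kummer summand equals $\lfloor 2n/p^k\rfloor-2\lfloor n/p^k\rfloor$. If $p^k\le n$, this is at most $1\le 2\lfloor n/p^k\rfloor$, fine. The remaining case is $n<p^k\le2n$, where the $k$-th term of $\gamma_p$ is $0$ but the Kummer summand is $1$; this is the obstacle. I handle it by borrowing from the $k=1$ term: at most one $k$ can satisfy $n<p^k\le 2n$ (since $p\ge2$), and for that $k\ge2$ we have $p\le p^{k-1}\le n$, so $\lfloor n/p\rfloor\ge1$. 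The $k=1$ term of $\gamma_p$ is $2\lfloor n/p\rfloor\ge2$ while its Kummer summand is at most $1$, leaving a surplus of at least $1$ to absorb the extra summand. Summing over $k$ then gives $\beta_p\le\gamma_p$, so the contribution is $0$.

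Combining the three ranges gives $\nu_p(R_n)=1$ exactly for $n<p\le2n$ and $0$ otherwise, which is \eqref{eq:Rn}. The only delicate point is the bookkeeping in the range $p\le n$, specifically the single power $p^k$ that can lie in $(n,2n]$; the other two ranges are immediate from Legendre's formula.
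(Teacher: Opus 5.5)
Your proof is correct. The paper does not actually prove this lemma. It imports the identity from \emph{Prime-Interval Algebras} (Proposition~2.1 there) and gives only the heuristic that $\binom{2n}{n}$ contains each prime of $(n,2n]$ exactly once, while the gcd removes the factors coming from smaller primes.

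Your reduction to $\nu_p(R_n)=\max\bigl(0,\nu_p\binom{2n}{n}-2\nu_p(n!)\bigr)$ turns that heuristic into a rigorous, self-contained argument. It fills in the citation rather than taking a route the paper avoids. The three ranges of $p$ are handled correctly.

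The only real content is the inequality $\nu_p\binom{2n}{n}\le 2\nu_p(n!)$ for $p\le n$. Your borrowing argument for it can be compressed to one line. Put $N=\#\{k\ge1:p^k\le n\}$, and note $N\ge1$ because $p\le n$. At most one power of $p$ lies in $(n,2n]$, so
$\nu_p\binom{2n}{n}\le\#\{k\ge1:p^k\le 2n\}\le N+1\le 2N\le 2\nu_p(n!)$.

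The bound is sharp. For example, $p=3$ and $n=5$ give $\nu_3(252)=2=2\nu_3(120)$. So a purely termwise comparison without the surplus from $k=1$ would indeed fail, as you observed.
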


Let $\NextPrime(n)$ denote the least prime strictly greater than $n$.
Bertrand's theorem ensures that \eqref{eq:Rn} is nonempty
\cite{HardyWright}.

\begin{theorem}[Cyclotomic successor-prime identity]
\label{thm:next-prime}
For every $n\ge1$,
\begin{equation}
\label{eq:next-prime}
\NextPrime(n)=\nu_2\!\left(\Phi_{R_n}(2)+1\right).
\end{equation}
In particular, if $p_k$ is the $k$th prime, then
\begin{equation}
\label{eq:prime-recurrence}
p_{k+1}
=
\nu_2\!\left(\Phi_{R_{p_k}}(2)+1\right).
\end{equation}
\end{theorem}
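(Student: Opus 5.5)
The plan is to apply \cref{thm:plus} to the index $R_n$, after checking that $R_n$ satisfies its hypotheses. By \cref{lem:Rn}, $R_n=\prod_{n<p\le2n}p$ is a product of distinct primes, so it is squarefree. It remains to show $R_n>1$, which means showing that the interval $(n,2n]$ contains a prime. For $n\ge2$ this is Bertrand's postulate. For $n=1$ we check directly: $R_1=\binom21/\gcd(2,1)=2$, so the interval $(1,2]$ contains the prime $2$.

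With $R_n>1$ squarefree, \eqref{eq:plus} gives
\[
\nu_2\!\left(\Phi_{R_n}(2)+1\right)=\lpf(R_n).
\]
Since the prime divisors of $R_n$ are exactly the primes in $(n,2n]$, the least of them is the least prime exceeding $n$. This uses the fact that the interval is nonempty, so no prime larger than $n$ lies below $2n$ and outside $R_n$'s support. Hence $\lpf(R_n)=\NextPrime(n)$, which is \eqref{eq:next-prime}.

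The recurrence \eqref{eq:prime-recurrence} follows by taking $n=p_k$, because $\NextPrime(p_k)=p_{k+1}$ by the definition of the prime enumeration.

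There is no real obstacle here. The only point needing care is nonemptiness of the interval, which is needed both for $R_n>1$ (so that \cref{thm:plus} applies) and for $\lpf(R_n)$ to be the \emph{next} prime. That point is handled by Bertrand's theorem together with the direct check at $n=1$.
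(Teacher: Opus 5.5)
Your proof is correct and follows the paper's argument: \cref{lem:Rn} makes $R_n$ squarefree with least prime factor $\NextPrime(n)$, with nonemptiness of $(n,2n]$ supplied by Bertrand's theorem (which the paper cites just before the statement), and \cref{thm:plus} then gives the valuation. Your direct check $R_1=2$ and your remark that $R_n>1$ is needed for \cref{thm:plus} spell out details the paper leaves implicit.
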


\begin{proof}
By \cref{lem:Rn}, $R_n$ is squarefree and its least prime factor is
$\NextPrime(n)$. Apply \cref{thm:plus}.
\end{proof}

Starting with $p_1=2$, the recurrence uses each prime as the input for the
next step. The interval quotient supplies a nonempty squarefree index, and
the plus valuation selects its smallest prime divisor.

For example,
\[
R_{10}=11\cdot13\cdot17\cdot19=46189,
\]
so the plus extractor reads the first factor directly:
\[
\nu_2(\Phi_{46189}(2)+1)=11=\NextPrime(10).
\]

\subsection{Factorial indices and dyadic prime shells}
\label{sec:factorial-shells}

The same interval product can be recovered by comparing two primorials.
Passing from $n!$ to $(2n)!$ introduces exactly the primes in $(n,2n]$ into
the radical. Substituting the local radical formula into this observation
gives a recurrence for the factorial-index valuations.

\begin{proposition}[Factorial-index recurrence]
\label{prop:factorial-recurrence}
For every $n\ge2$,
\begin{equation}
\label{eq:factorial-recurrence}
\nu_2\!\left(\Phi_{(2n)!}(2)-1\right)
=
n!\,\gcd\!\left(\binom{2n}{n},(n!)^2\right)
\nu_2\!\left(\Phi_{n!}(2)-1\right).
\end{equation}
Moreover,
\begin{equation}
\label{eq:R-from-valuations}
R_n
=
\frac{(2n)!\,\nu_2(\Phi_{n!}(2)-1)}
{n!\,\nu_2(\Phi_{(2n)!}(2)-1)}.
\end{equation}
\end{proposition}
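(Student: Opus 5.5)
The plan is to reduce both identities to \cref{cor:minus} and \cref{lem:Rn}, which turns the statement into an identity about factorials and radicals. By \eqref{eq:minus} applied to $n!$ and $(2n)!$,
\[
\nu_2\!\left(\Phi_{n!}(2)-1\right)=\frac{n!}{\rad(n!)},\qquad
\nu_2\!\left(\Phi_{(2n)!}(2)-1\right)=\frac{(2n)!}{\rad((2n)!)}.
\]
Every prime at most $n$ divides $n!$, and the primes dividing $(2n)!$ are exactly the primes at most $2n$. Hence $\rad((2n)!)=\rad(n!)\,R_n$, using the product formula in \eqref{eq:Rn}.

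First I will prove \eqref{eq:R-from-valuations}, since it is the cleaner of the two. Substituting the two valuations above, the right-hand side becomes
\[
\frac{(2n)!\cdot n!/\rad(n!)}{n!\cdot(2n)!/\rad((2n)!)}=\frac{\rad((2n)!)}{\rad(n!)}=R_n .
\]

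For \eqref{eq:factorial-recurrence}, I take the ratio of the two valuations:
\[
\frac{\nu_2(\Phi_{(2n)!}(2)-1)}{\nu_2(\Phi_{n!}(2)-1)}=\frac{(2n)!}{n!\,R_n}.
\]
Now write $(2n)!=\binom{2n}{n}(n!)^2$ and use the first equality in \eqref{eq:Rn}, which says $\binom{2n}{n}/R_n=\gcd(\binom{2n}{n},(n!)^2)$. Then
\[
\frac{(2n)!}{n!\,R_n}=n!\,\frac{\binom{2n}{n}}{R_n}=n!\,\gcd\!\left(\binom{2n}{n},(n!)^2\right),
\]
which is exactly the claimed multiplier.

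There is no real obstacle here. The only points needing care are the bookkeeping identity $\rad((2n)!)=\rad(n!)R_n$ and the requirement $n\ge2$. That requirement ensures $n!>1$, so \cref{cor:minus} applies to both indices.
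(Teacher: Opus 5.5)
Your proposal is correct and follows essentially the same route as the paper: both compute the ratio of the two valuations as $(2n)!/(n!\,R_n)$ from the radical identity and \cref{lem:Rn}, then rewrite it as $n!\gcd\bigl(\binom{2n}{n},(n!)^2\bigr)$. The only cosmetic difference is that you verify \eqref{eq:R-from-valuations} by direct substitution, whereas the paper obtains it by rearranging the same ratio.
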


\begin{proof}
By \eqref{eq:primorial} and \eqref{eq:Rn},
\begin{align*}
\frac{\nu_2(\Phi_{(2n)!}(2)-1)}
{\nu_2(\Phi_{n!}(2)-1)}
&=
\frac{(2n)!}{n!R_n}\\
&=
n!\,\gcd\!\left(\binom{2n}{n},(n!)^2\right).
\end{align*}
This proves \eqref{eq:factorial-recurrence}, and rearranging the first line
gives \eqref{eq:R-from-valuations}.
\end{proof}

At powers of two, consecutive valuations therefore recover the complete prime
shell
\begin{equation}
\label{eq:dyadic-shell}
\prod_{2^j<p\le2^{j+1}}p
=
\frac{(2^{j+1})!\,\nu_2(\Phi_{(2^j)!}(2)-1)}
{(2^j)!\,\nu_2(\Phi_{(2^{j+1})!}(2)-1)}
\qquad(j\ge1).
\end{equation}
The product is squarefree, so repeated use of the plus extractor lists the
primes in each shell in increasing order. Iterating over $j\ge1$ recovers
all odd primes; adjoining $2$ gives the complete prime sequence.

The first two shells make the mechanism concrete. The minus extractor gives
\[
\nu_2(\Phi_{2!}(2)-1)=1,
\qquad
\nu_2(\Phi_{4!}(2)-1)=4,
\qquad
\nu_2(\Phi_{8!}(2)-1)=192.
\]
Equation \eqref{eq:dyadic-shell} then returns $3$ from $(2,4]$ and
$35=5\cdot7$ from $(4,8]$; the plus extractor subsequently separates $5$
and $7$.

The formulas in this section use the factorization of the index to explain
what the valuation returns, but their displayed inputs are factorials,
binomial coefficients, and gcds. Their cost is a separate issue: evaluating
the resulting cyclotomic integers can be far more expensive than finding the
primes by standard methods.

\section{Archimedean scale separation and factor peeling}
\label{sec:logarithmic}

The local identities inspect the low binary digits of $\Phi_n(2)$.
Its real logarithm offers another way to recover the index arithmetic. The
main term is the degree $\varphi(n)$; the information we need lies in the
small discrepancy from that integer. To use the discrepancy, we must identify
its leading term and control the combined effect of all later terms.

For $t\ge1$, define the elementary correction
\begin{equation}
\label{eq:lambda}
\lambda(t):=-\log_2(1-2^{-t}).
\end{equation}
For $u>0$, define the scale map
\begin{equation}
\label{eq:scale}
\Scale(u):=-\log_2\!\left((\log 2)u\right).
\end{equation}
The scale map approximately inverts the correction, since
\[
(\log 2)\lambda(t)=-\log(1-2^{-t})
=2^{-t}+O(2^{-2t}).
\]
Thus a residual dominated by $\lambda(t)$ should identify $t$ after taking
$\Scale$ and rounding. The estimates below make this statement exact.

Products of already recovered primes also contribute to the residual. We
keep these contributions together in a finite sum, which we call the
logarithmic fingerprint. For a finite set of primes $P$ and an integer
$a\ge1$, define
\begin{equation}
\label{eq:fingerprint}
\Fingerprint_a(P)
:=
\sum_{S\subseteq P}(-1)^{|S|}
\lambda\!\left(a\prod_{p\in S}p\right),
\end{equation}
where the empty product is $1$.

\begin{proposition}[Exact logarithmic fingerprint]
\label{prop:log-master}
Let $n>1$, let $P$ be its set of distinct prime divisors, and put
$a=n/\rad(n)$. Then
\begin{equation}
\label{eq:log-master}
\log_2\Phi_n(2)
=
\varphi(n)-\mu(\rad(n))\Fingerprint_a(P).
\end{equation}
Equivalently,
\begin{equation}
\label{eq:log-divisor}
\log_2\Phi_n(2)
=
\varphi(n)
+
\sum_{d\mid\rad(n)}
\mu\!\left(\frac{\rad(n)}d\right)
\log_2\!\left(1-2^{-ad}\right).
\end{equation}
\end{proposition}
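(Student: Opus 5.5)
We start from the radical reduction of \cref{prop:radical-reduction}, written with $r=\rad(n)$ and $a=n/r$:
\[
\Phi_n(2)=\prod_{d\mid r}\bigl(2^{ad}-1\bigr)^{\mu(r/d)}.
\]
Every factor is a positive real number, because $ad\ge1$. We may therefore take $\log_2$ term by term. Factoring out the leading power gives
\[
\log_2(2^{ad}-1)=ad+\log_2(1-2^{-ad}),
\]
so
\[
\log_2\Phi_n(2)=a\sum_{d\mid r}\mu(r/d)\,d+\sum_{d\mid r}\mu(r/d)\log_2(1-2^{-ad}).
\]

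The first sum is $a\varphi(r)$, by the standard identity $\sum_{d\mid r}\mu(r/d)d=\varphi(r)$. Since $a$ has the same prime support as $n$, we have $a\varphi(r)=\varphi(ar)=\varphi(n)$; equivalently, $\varphi(n)=n\prod_{p\mid n}(1-1/p)$. This proves \eqref{eq:log-divisor}.

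To obtain \eqref{eq:log-master}, we reindex the divisor sum by subsets. The divisors $d$ of the squarefree number $r$ correspond bijectively to subsets $S\subseteq P$ via $d=\prod_{p\in S}p$. Under this correspondence,
\[
\mu(r/d)=(-1)^{|P|-|S|}=\mu(r)(-1)^{|S|}.
\]
By the definition \eqref{eq:lambda}, $\log_2(1-2^{-ad})=-\lambda(ad)$. Substituting both facts, the correction sum becomes
\[
-\mu(r)\sum_{S\subseteq P}(-1)^{|S|}\lambda\Bigl(a\prod_{p\in S}p\Bigr)=-\mu(\rad(n))\Fingerprint_a(P),
\]
which is exactly \eqref{eq:log-master}.

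There is no real obstacle. The only points needing care are that all factors are positive, so taking logarithms is legitimate (this holds since $ad\ge1$), and that the sign $\mu(r/d)=\mu(r)(-1)^{|S|}$ is correct, which uses $\mu(r)^2=1$ for squarefree $r$.
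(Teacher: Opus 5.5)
Your proof is correct and follows the paper's own argument: radical reduction to $\prod_{d\mid r}(2^{ad}-1)^{\mu(r/d)}$, then factoring out $2^{ad}$ to obtain the main term $a\varphi(r)=\varphi(n)$, then reindexing the divisors by subsets via $\mu(r/d)=\mu(r)(-1)^{|S|}$ (the paper writes this as $\mu(r)\mu(d)$). You also make explicit two steps the paper leaves implicit: that each factor is positive, and the identity $\sum_{d\mid r}\mu(r/d)\,d=\varphi(r)$.
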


\begin{proof}
The M\"obius product and radical reduction give
\[
\Phi_n(2)
=
\prod_{d\mid\rad(n)}(2^{ad}-1)^{\mu(\rad(n)/d)}.
\]
Factoring $2^{ad}$ from each term produces the leading exponent
$a\varphi(\rad(n))=\varphi(n)$. Since $\rad(n)$ is squarefree,
$\mu(\rad(n)/d)=\mu(\rad(n))\mu(d)$, and its divisors correspond to subsets
of $P$. Taking base-$2$ logarithms gives both displayed forms.
\end{proof}

The empty subset contributes $\lambda(a)$; singleton subsets contribute
$-\lambda(ap)$; larger subsets contribute at products of primes. This order
explains the reconstruction strategy. First recover the integer baseline and the sign of the discrepancy, then
remove all contributions supported on primes already found. The
least unrecovered prime is the smallest remaining subset product. A bound
for the tail is what turns this ordering into an exact rounding rule.

\begin{lemma}[Scale separation]
\label{lem:scale-separation}
For integers $t,s\ge1$,
\begin{equation}
\label{eq:lambda-tail}
\sum_{j\ge1}\lambda(t+js)
\le
\frac{\lambda(t)}{2^s-1},
\end{equation}
and
\begin{equation}
\label{eq:lambda-basic}
2^{-t}
<
(\log 2)\lambda(t)
<
\frac{2^{-t}}{1-2^{-t}}.
\end{equation}
\end{lemma}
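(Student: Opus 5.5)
Both inequalities come from the power series of the logarithm. For $0<x<1$ we have $-\log(1-x)=\sum_{k\ge1}x^k/k$, so
\[
(\log 2)\lambda(t)=\sum_{k\ge1}\frac{2^{-kt}}{k}.
\]
I will prove \eqref{eq:lambda-basic} first and then derive \eqref{eq:lambda-tail} from the same expansion.

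\textbf{Step 1: the two-sided bound.} For the lower bound in \eqref{eq:lambda-basic}, keep only the $k=1$ term. All remaining terms are strictly positive, so $(\log 2)\lambda(t)>2^{-t}$. For the upper bound, use $1/k\le 1$ for every $k$, with strict inequality once $k\ge2$, and compare with the geometric series:
\[
\sum_{k\ge1}\frac{2^{-kt}}{k}<\sum_{k\ge1}2^{-kt}=\frac{2^{-t}}{1-2^{-t}}.
\]
Both series converge because $t\ge1$, and the inequality is strict since the $k=2$ terms already differ.

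\textbf{Step 2: the tail bound.} I will argue termwise in the series rather than through \eqref{eq:lambda-basic}, because the crude bounds lose a constant factor. For each fixed $k\ge1$,
\[
\sum_{j\ge1}2^{-k(t+js)}=2^{-kt}\,\frac{2^{-ks}}{1-2^{-ks}}=\frac{2^{-kt}}{2^{ks}-1}\le\frac{2^{-kt}}{2^{s}-1},
\]
where the last inequality holds because $2^{ks}-1\ge 2^s-1$. All terms are nonnegative, so Tonelli allows the sums over $j$ and $k$ to be interchanged. Multiplying by $1/k$ and summing over $k$ then gives
\[
(\log 2)\sum_{j\ge1}\lambda(t+js)=\sum_{k\ge1}\frac1k\sum_{j\ge1}2^{-k(t+js)}\le\frac{1}{2^s-1}\sum_{k\ge1}\frac{2^{-kt}}{k}=\frac{(\log2)\lambda(t)}{2^s-1}.
\]
Dividing by $\log 2$ yields \eqref{eq:lambda-tail}.

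\textbf{Main obstacle.} The only delicate point is obtaining the exact constant $1/(2^s-1)$ in \eqref{eq:lambda-tail}. Bounding each $\lambda(t+js)$ by the upper estimate in \eqref{eq:lambda-basic} and comparing with the lower estimate for $\lambda(t)$ would only give the tail bound up to an extra factor of $1/(1-2^{-t-s})$. Working coefficientwise in the logarithmic series avoids this loss, since the geometric ratio $2^{-ks}$ for the $k$th term is at most $2^{-s}$.
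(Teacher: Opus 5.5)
Your proof is correct and essentially the paper's: the paper also expands $\lambda(t)=\frac{1}{\log 2}\sum_{k\ge1}2^{-tk}/k$, sums the geometric series in $j$ to get $2^{-tk}/\bigl(k(2^{sk}-1)\bigr)$, and uses $2^{sk}-1\ge 2^s-1$. For the two-sided bound the paper simply cites the inequality $x<-\log(1-x)<x/(1-x)$ for $0<x<1$, which your termwise comparison with the geometric series proves directly.
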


\begin{proof}
The expansion
\[
\lambda(t)=\frac1{\log 2}\sum_{k\ge1}\frac{2^{-tk}}k
\]
gives
\[
\sum_{j\ge1}\lambda(t+js)
=\frac1{\log2}\sum_{k\ge1}
\frac{2^{-tk}}{k(2^{sk}-1)}
\le\frac{\lambda(t)}{2^s-1}.
\]
The inequality $x<-\log(1-x)<x/(1-x)$ for $0<x<1$ gives
\eqref{eq:lambda-basic}.
\end{proof}

Write $\round(x)$ for the nearest integer when it is unique, and
$\OddRound(x)$ and $\EvenRound(x)$ for the nearest odd and even integers,
respectively, when unique. Our bounds place each scale strictly within its
rounding interval, so no tie-breaking convention is needed.

\subsection{Nonsquarefree indices: recovering the initial scale}

When $a=n/\rad(n)\ge2$, the entire correction is smaller than $1/2$.
Ordinary rounding can therefore recover $\varphi(n)$ before any prime has
been found. The size of the remaining discrepancy then identifies $a$.

\begin{lemma}[Positivity of the nonsquarefree fingerprint]
\label{lem:fingerprint-positive}
If $a\ge2$ and $P$ is a finite set of primes, then
\begin{equation}
\label{eq:fingerprint-bounds}
\left(1-\frac1{2^a-1}\right)\lambda(a)
\le
\Fingerprint_a(P)
\le
\lambda(a).
\end{equation}
In particular, $\Fingerprint_a(P)>0$.
\end{lemma}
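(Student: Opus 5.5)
We need to prove $(1-\frac1{2^a-1})\lambda(a)\le \Fingerprint_a(P)\le\lambda(a)$ for $a\ge2$. The plan is to expand everything in the power series of \cref{lem:scale-separation}. Writing $\lambda(t)=\frac1{\log2}\sum_{k\ge1}2^{-tk}/k$ and interchanging the finite subset sum with the absolutely convergent series gives
\[
\Fingerprint_a(P)=\frac1{\log 2}\sum_{k\ge1}\frac1k\sum_{S\subseteq P}(-1)^{|S|}x_k^{\prod_{p\in S}p},\qquad x_k:=2^{-ak}\in(0,\tfrac14].
\]
So it suffices to bound, for each fixed $x=x_k$, the inner alternating sum
\[
g_P(x):=\sum_{S\subseteq P}(-1)^{|S|}x^{\prod_{p\in S}p}
\]
between $(1-\frac1{2^a-1})x$ and $x$. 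The bound is not needed term by term exactly in this form; any per-$k$ bound of the shape $x_k(1-c)\le g_P(x_k)\le x_k$ with $c\le 1/(2^a-1)$ suffices, since summing $\frac1k\cdot$ against $x_k$ reproduces $\lambda(a)$.

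For the upper bound I would order the primes $p_1<\dots<p_m$ of $P$ and induct on $m$. Splitting the subsets by whether they contain $p_m$ gives the recursion
\[
g_{P}(x)=g_{P'}(x)-g_{P'}(x^{p_m}),\qquad P'=P\setminus\{p_m\},
\]
where the second term uses the fact that $x^{p_m\prod_S p}=(x^{p_m})^{\prod_S p}$. I then prove by induction the two-sided statement $0<g_P(y)\le y$ for all $y\in(0,1/4]$. For the base case, $g_\emptyset(y)=y$. For the inductive step, $g_P(x)\le g_{P'}(x)\le x$, because $g_{P'}(x^{p_m})>0$. This yields the upper bound and also positivity, once the lower bound is shown.

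For the lower bound I would use the crude estimate $g_{P'}(x^{p})\le x^{p}$ at each step. Unrolling the recursion gives
\[
g_P(x)\ge x-\sum_{j=1}^m x^{p_j}\ge x-\sum_{q\ge2}x^q=x-\frac{x^2}{1-x}.
\]
With $x=2^{-ak}$ this is $x\bigl(1-\frac{x}{1-x}\bigr)=x\bigl(1-\frac1{2^{ak}-1}\bigr)\ge x\bigl(1-\frac1{2^a-1}\bigr)$. This is positive since $a\ge2$, and it closes the induction's positivity claim because $g_P(x)\ge x-x^2/(1-x)>0$ for $x\le1/4$. Summing over $k$ with weights $1/(k\log2)$ then gives both inequalities in \eqref{eq:fingerprint-bounds}. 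Strict positivity of $\Fingerprint_a(P)$ follows from the lower bound, since $1-\frac1{2^a-1}\ge\frac23$.

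The main obstacle is organizing the induction so that positivity and the upper bound support each other. The lower-bound chain must use only the upper bound $g_{P'}(y)\le y$ applied at $y=x^{p_j}$, and each $x^{p_j}$ stays in $(0,1/4]$, so the inductive hypothesis applies there. I would therefore state the induction hypothesis as $x-\frac{x^2}{1-x}\le g_P(x)\le x$ for all $x\in(0,1/4]$. Proving this uniformly in $x$ avoids any circularity.
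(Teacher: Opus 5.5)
Your proof is correct, but it takes a different route from the paper's.

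The paper works directly with $\lambda$. Its lower bound needs no induction: it comes from the triangle inequality $|\Fingerprint_a(P)-\lambda(a)|\le\sum_{d\ge2}\lambda(ad)\le\lambda(a)/(2^a-1)$ and the tail estimate of \cref{lem:scale-separation}. That step tacitly uses that distinct sets of primes have distinct products. Only the upper bound is inductive, via $\Fingerprint_a(P\cup\{p\})=\Fingerprint_a(P)-\Fingerprint_{ap}(P)$, with $\Fingerprint_{ap}(P)>0$ supplied by the already-proved lower bound at scale $ap$.

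You instead pass to the power series, prove the inequality pointwise for the polynomial $g_P(x)$ on $(0,1/4]$, and sum with the positive weights $1/(k\log 2)$. Your recursion $g_P(x)=g_{P'}(x)-g_{P'}(x^{p_m})$ is the termwise form of the paper's. In your version, though, the two bounds feed each other: the lower bound comes from the upper bound on the prefixes, through the telescoped identity $g_P(x)=x-\sum_{j} g_{P_j}(x^{p_j})$ with $P_j=\{p_1,\ldots,p_{j-1}\}$.

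The paper's route is shorter once the tail lemma is available. Yours is self-contained and yields the sharper intermediate bound $\Fingerprint_a(P)\ge\lambda(a)-\sum_{p\in P}\lambda(ap)$. It also never uses unique factorization, so it holds for any finite set of distinct integers $\ge2$.

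One bookkeeping point: the bound $g_P(x)\ge x-x^2/(1-x)$ does not by itself propagate from $P'$ to $P$ in one step. Either make the induction strong over all prefixes $P_j$, as your unrolling implicitly does, or carry $x-\sum_{p\in P}x^p\le g_P(x)\le x$ as the hypothesis, which closes in a single step.
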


\begin{proof}
Every nonempty subset in \eqref{eq:fingerprint} has product at least $2$, so
\[
\left|\Fingerprint_a(P)-\lambda(a)\right|
\le
\sum_{d\ge2}\lambda(ad)
\le
\frac{\lambda(a)}{2^a-1}.
\]
This gives the lower bound. For the upper bound, adjoining a prime $p$ changes
$\Fingerprint_a(P)$ to
$\Fingerprint_a(P)-\Fingerprint_{ap}(P)$, and the lower bound just proved
shows that the subtracted quantity is positive. Induction on $|P|$ completes
the proof.
\end{proof}

\begin{theorem}[Archimedean recovery of the repeated-power scale]
\label{thm:arch-power}
Let $n>1$ be nonsquarefree and put $L=\log_2\Phi_n(2)$. Then
\begin{equation}
\label{eq:nonsquare-initial}
\round(L)=\varphi(n),
\qquad
\operatorname{sgn}(\round(L)-L)=\mu(\rad(n)),
\end{equation}
and
\begin{equation}
\label{eq:nonsquare-a}
\frac{n}{\rad(n)}
=
\round\!\left(\Scale\!\left(|\round(L)-L|\right)\right).
\end{equation}
\end{theorem}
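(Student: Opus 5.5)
We start from \cref{prop:log-master}: with $a=n/\rad(n)\ge2$ and $P$ the prime support of $n$,
\[
L=\varphi(n)-\mu(\rad(n))\Fingerprint_a(P).
\]
By \cref{lem:fingerprint-positive}, $0<\Fingerprint_a(P)\le\lambda(a)\le\lambda(2)=\log_2(4/3)\approx0.415<1/2$. Hence $L$ lies strictly within distance $1/2$ of the integer $\varphi(n)$, so $\round(L)=\varphi(n)$ is unique. Moreover
\[
\round(L)-L=\mu(\rad(n))\Fingerprint_a(P),
\]
where $\Fingerprint_a(P)>0$ and $\mu(\rad(n))=\pm1$, because $\rad(n)$ is squarefree and greater than $1$. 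This gives \eqref{eq:nonsquare-initial} and also $|\round(L)-L|=\Fingerprint_a(P)$.

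For \eqref{eq:nonsquare-a} it suffices to show that $\Scale(\Fingerprint_a(P))$ lies in the open interval $(a-\tfrac12,a+\tfrac12)$. Since $\Scale$ is decreasing, we bound $(\log2)\Fingerprint_a(P)$ from both sides using \cref{lem:fingerprint-positive,lem:scale-separation}. On the upper side,
\[
(\log 2)\Fingerprint_a(P)\le(\log2)\lambda(a)<\frac{2^{-a}}{1-2^{-a}},
\]
so
\[
\Scale(\Fingerprint_a(P))>a+\log_2(1-2^{-a})\ge a+\log_2(3/4)\approx a-0.415>a-\tfrac12.
\]
On the lower side,
\[
(\log2)\Fingerprint_a(P)\ge\Bigl(1-\frac1{2^a-1}\Bigr)(\log2)\lambda(a)>\frac{2^a-2}{2^a-1}\,2^{-a},
\]
so
\[
\Scale(\Fingerprint_a(P))<a-\log_2\frac{2^a-2}{2^a-1}.
\]

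The main obstacle is this lower-side correction at $a=2$, where the factor $(2^a-2)/(2^a-1)$ equals $2/3$. Then $-\log_2(2/3)\approx0.585>1/2$, so the crude bound fails and we need sharper control. For $a\ge3$ the factor is at least $6/7$, giving $-\log_2(6/7)\approx0.22$, which suffices.

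For $a=2$ I would avoid discarding the second-order terms. I would use the exact expansion $(\log2)\lambda(t)=\sum_k 2^{-tk}/k$, and bound the tail $\sum_{d\ge2}\lambda(2d)$ from \cref{lem:fingerprint-positive} by $\lambda(4)+\lambda(6)+\cdots$ rather than by $\lambda(2)/3$. Concretely,
\[
(\log2)\Fingerprint_2(P)\ge(\log2)\bigl(\lambda(2)-\lambda(4)-\lambda(6)-\cdots\bigr),
\]
and here $(\log2)\lambda(2)=\log(4/3)\approx0.2877$ while $(\log2)\sum_{j\ge2}\lambda(2j)\approx0.0645+0.0157+0.0039+\cdots\approx0.085$. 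This gives a lower bound of about $0.20>2^{-2.5}\approx0.177$, so $\Scale<2.5$. The tail estimate should be justified by applying \eqref{eq:lambda-tail} with $t=4$ and $s=2$, which gives $\sum_{j\ge2}\lambda(2j)\le\lambda(4)\cdot\tfrac43$.

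If a uniform treatment is preferred, the same refined tail bound, $\lambda(a)-\tfrac43\lambda(2a)$, handles every $a\ge2$ at once. Either way the scale lands strictly inside $(a-\tfrac12,a+\tfrac12)$, and rounding returns $a=n/\rad(n)$.
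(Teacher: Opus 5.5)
Your proof is correct and is essentially the paper's argument. It uses \cref{prop:log-master} and \cref{lem:fingerprint-positive} for the baseline and sign, and the same two-sided bounds on $\Scale(\Fingerprint_a(P))$ via \eqref{eq:lambda-basic}, with $a=2$ treated separately.

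The only difference is how you repair $a=2$. The paper keeps the crude factor $\frac23$ and sharpens $\log\frac43>\frac{9}{32}$, giving $\frac{3}{16}>2^{-5/2}$. You instead sharpen the tail to $\frac43\lambda(4)$, but that step is not what rescues the case. With the exact value $\log\frac43$, the crude factor already suffices, since $\frac23\log\frac43\approx0.192>2^{-5/2}$. Conversely, your refined tail paired only with $(\log2)\lambda(2)>\frac14$ would fail, since $\frac14-\frac{4}{45}<2^{-5/2}$. So the real bottleneck is the lower bound on $\lambda(2)$, not the factor $\frac23$.
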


\begin{proof}
Put $a=n/\rad(n)\ge2$ and let $P$ be the prime-divisor set. By
\cref{prop:log-master,lem:fingerprint-positive},
\[
\varphi(n)-L=\mu(\rad(n))\Fingerprint_a(P),
\qquad
0<\Fingerprint_a(P)\le\lambda(a)\le\lambda(2)<\frac12.
\]
This proves \eqref{eq:nonsquare-initial}.

The bounds in \eqref{eq:fingerprint-bounds} and
\eqref{eq:lambda-basic} imply
$|\Scale(\Fingerprint_a(P))-a|<1/2$. For $a\ge3$ this follows directly from
\[
a+\log_2(1-2^{-a})
<\Scale(\Fingerprint_a(P))
<a-\log_2\!\left(1-\frac1{2^a-1}\right).
\]
For $a=2$, the lower estimate exceeds $3/2$. Also,
$-\log(1-1/4)>1/4+1/32=9/32$, so
\[
(\log2)\Fingerprint_2(P)
\ge\tfrac23(\log2)\lambda(2)
>\frac3{16}>2^{-5/2}.
\]
This gives the upper estimate $\Scale(\Fingerprint_2(P))<5/2$ and proves
\eqref{eq:nonsquare-a}.
\end{proof}

Knowing $a$ fixes the spacing of all subsequent exponents. At each step we
compare the full fingerprint, obtained from $L$, with the fingerprint of the
primes already recovered. Their difference starts at the next prime.

\begin{theorem}[Complete peeling for nonsquarefree indices]
\label{thm:nonsquare-peeling}
Let $n>1$ be nonsquarefree, put $a=n/\rad(n)$ and
$L=\log_2\Phi_n(2)$, and list the distinct prime divisors as
$p_1<\cdots<p_k$. After $p_1,\ldots,p_j$ have been recovered, where
$0\le j<k$, the next prime is
\begin{equation}
\label{eq:nonsquare-peeling}
p_{j+1}
=
\round\!\left(
\frac{\Scale\!\left(
\Fingerprint_a(\{p_1,\ldots,p_j\})-|\round(L)-L|
\right)}a
\right),
\end{equation}
where the set is empty when $j=0$. For a given $n$, iteration stops when
the product of the recovered primes equals $n/a$. Their multiplicities
are then obtained by exact division of $n$.
\end{theorem}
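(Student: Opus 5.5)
The plan is to reduce each peeling step to a single application of \cref{lem:scale-separation}, mirroring the proof of \cref{thm:arch-power}.

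\emph{Rewriting the input.} By \cref{thm:arch-power} and \cref{prop:log-master}, $\round(L)=\varphi(n)$ and $|\round(L)-L|=\Fingerprint_a(P)$, where $P=\{p_1,\ldots,p_k\}$. Hence the argument of $\Scale$ in \eqref{eq:nonsquare-peeling} is
\[
D_j:=\Fingerprint_a(P_j)-\Fingerprint_a(P),\qquad P_j=\{p_1,\ldots,p_j\}.
\]
From \eqref{eq:fingerprint}, the subsets $S\subseteq P_j$ cancel, so
\[
D_j=-\sum_{S\subseteq P,\ S\not\subseteq P_j}(-1)^{|S|}\lambda\Bigl(a\prod_{p\in S}p\Bigr).
\]

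\emph{Isolating the leading term.} Put $q=p_{j+1}$. The singleton $S=\{q\}$ contributes $+\lambda(aq)$. Every other admissible $S$ contains some prime $\ge q$ and is not $\{q\}$, so its product $m$ is a squarefree integer with $m>q$. Distinct subsets have distinct products by unique factorization, so these $m$ are distinct integers $\ge q+1$. Since $\lambda$ is positive and decreasing, applying \eqref{eq:lambda-tail} with $t=aq$ and $s=a$ gives
\[
|D_j-\lambda(aq)|\le\sum_{m>q}\lambda(am)\le\frac{\lambda(aq)}{2^a-1}\le\frac{\lambda(aq)}3 .
\]
Thus $\tfrac23\lambda(aq)\le D_j\le\tfrac43\lambda(aq)$, and in particular $D_j>0$, so $\Scale(D_j)$ is defined.

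\emph{Rounding.} By \eqref{eq:lambda-basic}, $\Scale(\lambda(aq))$ lies in $\bigl(aq+\log_2(1-2^{-aq}),\,aq\bigr)$. Since $aq\ge4$, the lower gap is below $0.1$. The multiplicative factors $2/3$ and $4/3$ shift $\Scale$ by at most $\log_2(3/2)<0.59$. Hence $|\Scale(D_j)-aq|<0.7<1\le a/2$, because $a\ge2$. Dividing by $a$ gives $|\Scale(D_j)/a-q|<1/2$, so rounding returns $q=p_{j+1}$ with no tie.

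\emph{Termination and multiplicities.} Iteration stops exactly when the product of the recovered primes equals $\rad(n)=n/a$, with $a$ supplied by \eqref{eq:nonsquare-a}. The exponent of each $p_i$ is then read off by repeated exact division of $n$.

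The main obstacle is the tail estimate. One has to notice that the remaining subset products are distinct multiples $am$ with $m>q$ integral, so they can be dominated by an arithmetic progression with step $a$. That is what lets the bound depend only on $a$ and stay uniform in the number of remaining primes.
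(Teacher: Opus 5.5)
Your proposal is correct and follows the paper's proof essentially step for step. Both arguments use the same residual $\Fingerprint_a(P_j)-\Fingerprint_a(P)$ with its unique smallest exponent $ap_{j+1}$, apply the tail bound \eqref{eq:lambda-tail} with $t=ap_{j+1}$ and $s=a$ to get the $\tfrac23$ to $\tfrac43$ window, and conclude $|\Scale(R)-ap_{j+1}|<1\le a/2$ before dividing by $a$.
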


\begin{proof}
Let $P=\{p_1,\ldots,p_k\}$. By \cref{prop:log-master,thm:arch-power},
\[
|\round(L)-L|=\Fingerprint_a(P).
\]
Put $P_j=\{p_1,\ldots,p_j\}$ and write the residual as
\[
R=\Fingerprint_a(P_j)-\Fingerprint_a(P)
=-\sum_{\substack{S\subseteq P\\S\not\subseteq P_j}}
(-1)^{|S|}\lambda\!\left(a\prod_{p\in S}p\right).
\]
Unique factorization makes the subset products distinct. The smallest is
$p_{j+1}$, arising from a singleton with positive coefficient; every other
exponent is at least $ap_{j+1}+a$. With $t=ap_{j+1}\ge4$,
\eqref{eq:lambda-tail} gives
\[
\frac23\lambda(t)\le R\le\frac43\lambda(t).
\]
In particular, $R>0$. By \eqref{eq:lambda-basic},
\[
t+\log_2(1-2^{-t})-\log_2(4/3)
<\Scale(R)<t+\log_2(3/2).
\]
For $t\ge4$, the lower endpoint is at least
$t+\log_2(45/64)>t-1$, and the upper endpoint is less than $t+1$.
Dividing by $a\ge2$ places $\Scale(R)/a$ strictly within distance $1/2$ of
$p_{j+1}$, proving \eqref{eq:nonsquare-peeling}.
\end{proof}

At $n=45$, for example, $n/\rad(n)=3$. The local identities give
\[
\nu_2(\Phi_{45}(2)-1)=3,
\qquad
\nu_2(\Phi_{45}(2)+1)=1.
\]
The real value tells the same story in stages:
$\round(\log_2\Phi_{45}(2))=\varphi(45)=24$; its first residual scale is $3$;
after that scale is removed, the exponents $9$ and $15$ return the prime
divisors $3$ and $5$ after division by $a=3$.

\begin{corollary}[Archimedean primorial identity]
\label{cor:arch-primorial}
Let $m\ge4$, put
\[
L=\log_2\Phi_{m!}(2),
\qquad
M=\round(L).
\]
Then
\begin{equation}
\label{eq:arch-primorial}
\prod_{p\le m}p
=
\frac{m!}{\round\!\left(\Scale\!\left(|L-M|\right)\right)}.
\end{equation}
\end{corollary}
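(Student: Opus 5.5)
The plan is to apply \cref{thm:arch-power} with $n=m!$. The only work is to check its hypotheses and to identify the radical and the exponent $a$.

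First, $m!$ is nonsquarefree for every $m\ge4$, since $4\mid m!$. So \cref{thm:arch-power} applies to $n=m!$ with $L=\log_2\Phi_{m!}(2)$. Its first part, \eqref{eq:nonsquare-initial}, gives $M=\round(L)=\varphi(m!)$. Here $\round$ is unambiguous because the fingerprint lies strictly between $0$ and $1/2$. Since \cref{thm:arch-power} uses $|\round(L)-L|$, its second part, \eqref{eq:nonsquare-a}, gives
\[
\round\!\left(\Scale\!\left(|L-M|\right)\right)=\frac{m!}{\rad(m!)}.
\]

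Next, the prime divisors of $m!$ are exactly the primes $p\le m$. Hence $\rad(m!)=\prod_{p\le m}p$, as already used in \cref{cor:primorial}. Solving the displayed identity for the radical gives \eqref{eq:arch-primorial}. The denominator is a positive integer, namely $a=m!/\rad(m!)\ge2$, so the division is legitimate.

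I do not expect a real obstacle. The only points needing care are the following.
\begin{itemize}
\item The threshold $m\ge4$ is exactly what guarantees nonsquarefreeness. For $m=2,3$ the index $m!$ is squarefree, and \cref{thm:arch-power} would not apply.
\item The statement uses $|L-M|$ rather than $|M-L|$, which is only a cosmetic difference.
\end{itemize}
All the analytic content, including the rounding margins for $a=2$ versus $a\ge3$, was already handled in the proof of \cref{thm:arch-power}.
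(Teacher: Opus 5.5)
Your proposal is correct and is essentially the paper's own proof. The paper likewise applies \cref{thm:arch-power} to $n=m!$, which is nonsquarefree for $m\ge4$, and uses $\rad(m!)=\prod_{p\le m}p$ to solve for the primorial.
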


\begin{proof}
Apply \cref{thm:arch-power} to $m!$ and use
$\rad(m!)=\prod_{p\le m}p$.
\end{proof}

\subsection{Odd squarefree indices: removing the unit correction}

For a squarefree index, $a=1$ and the empty-subset term is
$\lambda(1)=1$. We must account for this unit correction before rounding.
When all prime divisors are odd, the remaining exponents are odd as well;
the resulting gap of at least $2$ leaves enough room to identify the least
prime by rounding to the nearest odd integer.

\begin{lemma}[Least odd squarefree scale]
\label{lem:odd-scale}
Let $P$ be a nonempty finite set of odd primes and let $p=\min P$. Then
\begin{equation}
\label{eq:odd-scale-bound}
\left|1-\Fingerprint_1(P)-\lambda(p)\right|
\le
\frac13\lambda(p).
\end{equation}
Moreover,
\begin{equation}
\label{eq:odd-scale-positive}
0<1-\Fingerprint_1(P)<\frac12.
\end{equation}
\end{lemma}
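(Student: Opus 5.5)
We expand the fingerprint with $a=1$. The empty subset contributes $\lambda(1)=1$, so
\[
1-\Fingerprint_1(P)=-\sum_{\emptyset\ne S\subseteq P}(-1)^{|S|}\lambda\Bigl(\prod_{q\in S}q\Bigr)
=\lambda(p)+\sum_{\substack{S\ne\emptyset\\ S\ne\{p\}}}(-(-1)^{|S|})\lambda\Bigl(\prod_{q\in S}q\Bigr).
\]
The singleton $\{p\}$ gives $+\lambda(p)$. Every other nonempty subset has an odd product $d$ with $d>p$, since all primes involved are odd. By unique factorization these products are distinct odd integers. Each is therefore of the form $p+2j$ with $j\ge1$, and distinct subsets give distinct $j$.

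We bound the remainder crudely in absolute value. Since $\lambda>0$, its modulus is at most $\sum_{j\ge1}\lambda(p+2j)$. Applying \eqref{eq:lambda-tail} from \cref{lem:scale-separation} with $t=p$ and $s=2$ gives the bound $\lambda(p)/(2^2-1)=\lambda(p)/3$. This proves \eqref{eq:odd-scale-bound}.

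For \eqref{eq:odd-scale-positive}, the first bound gives
\[
\tfrac23\lambda(p)\le 1-\Fingerprint_1(P)\le\tfrac43\lambda(p).
\]
The lower bound is positive. For the upper bound, $p\ge3$ and $\lambda$ is decreasing, so $\lambda(p)\le\lambda(3)=\log_2(8/7)$. We need $\tfrac43\log_2(8/7)<\tfrac12$, that is, $\log_2(8/7)<3/8$, or equivalently $(8/7)^8<8$. By \eqref{eq:lambda-basic}, $(\log2)\lambda(3)<\tfrac{1/8}{7/8}=\tfrac17$, so $\lambda(3)<1/(7\log 2)\approx0.206$. Then $\tfrac43\cdot0.206\approx0.275<\tfrac12$, which completes the argument.

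The main point to verify is the injection of the non-singleton subset products into $\{p+2,p+4,\dots\}$. This is where oddness is used: the products are odd and exceed $p$. This is what allows the spacing $s=2$ in the tail bound. The numeric check is routine.
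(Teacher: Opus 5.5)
Your proposal is correct and matches the paper's proof essentially step for step. Like the paper, you isolate the singleton $\{p\}$, note that the remaining subset products are distinct odd integers at least $p+2$, apply \eqref{eq:lambda-tail} with $s=2$, and finish with the numerical check $\tfrac43\lambda(3)<\tfrac12$.
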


\begin{proof}
The singleton $\{p\}$ contributes $\lambda(p)$ to
$1-\Fingerprint_1(P)$. Every other nonempty subset product is a distinct odd
integer at least $p+2$. Hence \cref{lem:scale-separation} gives
\[
\left|1-\Fingerprint_1(P)-\lambda(p)\right|
\le
\sum_{j\ge1}\lambda(p+2j)
\le
\frac13\lambda(p).
\]
The lower bound is therefore positive. Since $p\ge3$, the upper bound is at
most $(4/3)\lambda(3)<1/2$.
\end{proof}

\begin{theorem}[Odd squarefree logarithmic extraction]
\label{thm:odd-squarefree}
Let $n>1$ be odd and squarefree and put $L=\log_2\Phi_n(2)$. Then
\begin{equation}
\label{eq:odd-initial}
\round(L)=\varphi(n)-\mu(n),
\qquad
\operatorname{sgn}(L-\round(L))=\mu(n).
\end{equation}
\begin{equation}
\label{eq:odd-lpf}
\lpf(n)
=
\OddRound\!\left(
\Scale\!\left(|L-\round(L)|\right)
\right).
\end{equation}
\end{theorem}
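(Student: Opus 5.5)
We apply \cref{prop:log-master} with $a=1$, since $n$ is squarefree and so $\rad(n)=n$. This gives
\[
L=\varphi(n)-\mu(n)\Fingerprint_1(P),
\]
where $P$ is the set of prime divisors of $n$; $P$ is nonempty and consists of odd primes because $n>1$ is odd. The empty subset contributes $\lambda(1)=1$. We therefore rewrite
\[
L=\varphi(n)-\mu(n)+\mu(n)\bigl(1-\Fingerprint_1(P)\bigr).
\]
Put $D=1-\Fingerprint_1(P)$. By \eqref{eq:odd-scale-positive} of \cref{lem:odd-scale}, $0<D<1/2$. Since $\varphi(n)-\mu(n)$ is an integer and $|\mu(n)|=1$, $L$ lies strictly within distance $1/2$ of that integer. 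Hence $\round(L)=\varphi(n)-\mu(n)$, and $L-\round(L)=\mu(n)D$ has sign $\mu(n)$. This proves \eqref{eq:odd-initial}.

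For \eqref{eq:odd-lpf} we have $|L-\round(L)|=D$, so it remains to show $|\Scale(D)-p|<1$ with $p=\lpf(n)=\min P$, which is odd. Since $p$ is the only odd integer within distance $1$ of such a point, $\OddRound$ then returns $p$. From \eqref{eq:odd-scale-bound},
\[
\tfrac23\lambda(p)\le D\le\tfrac43\lambda(p),
\]
and by \eqref{eq:lambda-basic},
\[
2^{-p}<(\log2)\lambda(p)<\frac{2^{-p}}{1-2^{-p}}.
\]
Since $\Scale$ is decreasing, these combine to
\[
p+\log_2(1-2^{-p})-\log_2(4/3)<\Scale(D)<p+\log_2(3/2).
\]

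The upper bound is below $p+1$. For the lower bound we use $p\ge3$: then $\log_2(1-2^{-p})\ge\log_2(7/8)$, so the lower endpoint is at least $p+\log_2(21/32)>p-1$. This confirms $|\Scale(D)-p|<1$, so the point lies strictly inside the rounding interval of the odd integer $p$ and no tie arises.

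The only step needing care is this final numerical check, which parallels the one in \cref{thm:nonsquare-peeling}. The real work was already done in \cref{lem:odd-scale}.
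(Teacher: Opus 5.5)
Your proof is correct and follows the paper's argument essentially step for step. You rewrite $L$ via \cref{prop:log-master} with $a=1$ to isolate the unit correction $\lambda(1)=1$, use \cref{lem:odd-scale} both for the initial rounding and for the bound $\tfrac23\lambda(p)\le D\le\tfrac43\lambda(p)$, and convert this through \eqref{eq:lambda-basic} into a scale interval strictly inside $(p-1,p+1)$. Your explicit check that the lower endpoint is at least $p+\log_2(21/32)>p-1$ spells out the numerical step that the paper leaves implicit.
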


\begin{proof}
Let $P$ be the prime-divisor set. Since $\lambda(1)=1$,
\cref{prop:log-master} gives
\[
L
=
\varphi(n)-\mu(n)
+\mu(n)\bigl(1-\Fingerprint_1(P)\bigr).
\]
By \cref{lem:odd-scale}, the final factor is positive and smaller than $1/2$,
which proves \eqref{eq:odd-initial}. If $p=\lpf(n)$, the same lemma gives
\[
\frac23\lambda(p)
\le
|L-\round(L)|
\le
\frac43\lambda(p).
\]
Together with \eqref{eq:lambda-basic}, this yields
\[
p+\log_2(1-2^{-p})-\log_2(4/3)
<
\Scale(|L-\round(L)|)
<
p+\log_2(3/2).
\]
For $p\ge3$, these endpoints lie strictly between $p-1$ and $p+1$.
Therefore $p$ is the unique odd integer within distance $1$ of the scale,
proving \eqref{eq:odd-lpf}.
\end{proof}

After the first prime has been found, the next step must account for all
subset products involving the recovered primes. The fingerprint does exactly
this, so the same estimate can be applied to each successive residual.

\begin{theorem}[Complete peeling for odd squarefree indices]
\label{thm:odd-peeling}
Let $n>1$ be odd and squarefree, put $L=\log_2\Phi_n(2)$, and list its prime
divisors as $p_1<\cdots<p_k$. After $p_1,\ldots,p_j$ have been recovered,
where $0\le j<k$, one has
\begin{equation}
\label{eq:odd-peeling}
p_{j+1}
=
\OddRound\!\left(
\Scale\!\left(
|L-\round(L)|-1+\Fingerprint_1(\{p_1,\ldots,p_j\})
\right)
\right),
\end{equation}
where the set is empty when $j=0$. For a given $n$, the process stops when
the product of the recovered primes equals $n$.
\end{theorem}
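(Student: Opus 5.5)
The plan is to express the residual in \eqref{eq:odd-peeling} as a signed fingerprint difference and then repeat the scale-separation argument of \cref{lem:odd-scale} and \cref{thm:odd-squarefree}, with $p_{j+1}$ now playing the role of the least prime. Let $P=\{p_1,\ldots,p_k\}$ and $P_j=\{p_1,\ldots,p_j\}$. The proof of \cref{thm:odd-squarefree} shows
\[
|L-\round(L)|=1-\Fingerprint_1(P).
\]
Hence the argument of $\Scale$ in \eqref{eq:odd-peeling} is
\[
R:=\Fingerprint_1(P_j)-\Fingerprint_1(P)
=-\sum_{\substack{S\subseteq P\\ S\not\subseteq P_j}}(-1)^{|S|}\lambda\Bigl(\prod_{p\in S}p\Bigr).
\]
When $j=0$, $\Fingerprint_1(\emptyset)=\lambda(1)=1$, so $R=1-\Fingerprint_1(P)$, and the claim reduces to \cref{thm:odd-squarefree}. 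The same argument works uniformly for all $j$, however.

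Next, I will locate the leading term of $R$. A subset $S\not\subseteq P_j$ contains some $p_i$ with $i>j$, so its product is at least $p_{j+1}$. Equality holds only for the singleton $\{p_{j+1}\}$, which enters $R$ with coefficient $+1$. By unique factorization the subset products are distinct, and all of them are odd because $n$ is odd. Every other product is therefore a distinct odd integer at least $p_{j+1}+2$. Applying \eqref{eq:lambda-tail} with $t=p_{j+1}$ and $s=2$ bounds these terms in absolute value by $\lambda(p_{j+1})/3$, so
\[
\tfrac23\lambda(p_{j+1})\le R\le\tfrac43\lambda(p_{j+1}).
\]
In particular $R>0$, so $\Scale(R)$ is defined.

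Finally, combine this with \eqref{eq:lambda-basic} exactly as in the proof of \cref{thm:odd-squarefree}:
\[
p_{j+1}+\log_2(1-2^{-p_{j+1}})-\log_2(4/3)<\Scale(R)<p_{j+1}+\log_2(3/2).
\]
Since $p_{j+1}\ge3$, the lower endpoint exceeds $p_{j+1}-1$ and the upper endpoint is below $p_{j+1}+1$. Thus $p_{j+1}$ is the unique odd integer within distance $1$ of $\Scale(R)$, and $\OddRound$ returns it. The stopping rule holds because $n$ is squarefree, so $n=p_1\cdots p_k$.

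The main obstacle is the bookkeeping in the second step. One must check that subtracting $\Fingerprint_1(P_j)$ removes exactly the subsets contained in $P_j$, including the empty one. One must also check that the sign of the $\{p_{j+1}\}$ term is positive after the global sign flip, and that the remaining products really form a set of distinct odd integers, as the tail lemma requires. After that, the numerical estimates are identical to those already carried out in the proof of \cref{thm:odd-squarefree}.
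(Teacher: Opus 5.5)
Your proposal is correct and follows the paper's proof. In both, the residual is identified as $\Fingerprint_1(P_j)-\Fingerprint_1(P)$ via \cref{thm:odd-squarefree}. The singleton term $+\lambda(p_{j+1})$ dominates, the remaining distinct odd exponents $\ge p_{j+1}+2$ are bounded by the tail estimate with $s=2$, and the rounding interval from \cref{thm:odd-squarefree} is reused. You also write out the sign and distinctness bookkeeping that the paper only asserts.
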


\begin{proof}
Let $P=\{p_1,\ldots,p_k\}$. By \cref{prop:log-master,thm:odd-squarefree},
the residual inside \eqref{eq:odd-peeling} is
\[
\Fingerprint_1(\{p_1,\ldots,p_j\})-\Fingerprint_1(P).
\]
Its unique smallest scale is $p_{j+1}$; every other exponent is a distinct
odd integer at least $p_{j+1}+2$. The proof of \cref{lem:odd-scale} therefore
places the residual between $(2/3)\lambda(p_{j+1})$ and
$(4/3)\lambda(p_{j+1})$. The argument used in
\cref{thm:odd-squarefree} then returns $p_{j+1}$ by odd rounding.
\end{proof}

For $n=105=3\cdot5\cdot7$, one has
\[
\round\!\left(\log_2\Phi_{105}(2)\right)
=
\varphi(105)-\mu(105)=49.
\]
The residual is negative, hence $\mu(105)=-1$, and successive odd scales
return $3$, $5$, and $7$. In particular, after $3$ and $5$ have been found,
their contribution includes $\lambda(15)$ as well as the two singleton
terms. Retaining this mixed term is essential to the next subtraction.

\subsection{Even squarefree indices}
\label{sec:even-squarefree}

The prime $2$ changes the first logarithmic correction. Pairing the terms
whose subset products differ by a factor of $2$ replaces $\lambda$ by
\begin{equation}
\label{eq:kappa}
\kappa(t):=\lambda(t)-\lambda(2t)=\log_2(1+2^{-t}).
\end{equation}
For a finite set $P$ of odd primes, write
\begin{equation}
\label{eq:even-fingerprint}
\mathcal G(P):=\sum_{S\subseteq P}(-1)^{|S|}
\kappa\!\left(\prod_{p\in S}p\right).
\end{equation}
Thus $\Fingerprint_1(\{2\}\cup P)=\mathcal G(P)$. The resulting correction
can exceed $1/2$, so ordinary nearest-integer rounding need not recover
$\varphi(n)$. The fact that $\varphi(n)$ is even for $n>2$ supplies the correct
rounding rule.

\begin{theorem}[Even squarefree logarithmic extraction and peeling]
\label{thm:even-squarefree}
Let $n=2m>2$ be squarefree, put $L=\log_2\Phi_n(2)$, and list the prime
divisors of $m$ as $p_1<\cdots<p_k$. Then
\begin{equation}
\label{eq:even-initial}
M:=\EvenRound(L)=\varphi(n),
\qquad
\operatorname{sgn}(M-L)=\mu(n).
\end{equation}
For $P_j=\{p_1,\ldots,p_j\}$, with $P_0=\varnothing$ and $0\le j<k$,
\begin{equation}
\label{eq:even-peeling}
p_{j+1}
=\OddRound\!\left(
\Scale\!\left(\mathcal G(P_j)-|M-L|\right)
\right).
\end{equation}
The process stops when the product of the recovered odd primes equals $m$.
\end{theorem}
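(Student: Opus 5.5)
We read everything off the exact fingerprint identity of \cref{prop:log-master}, specialized to $a=1$ and the prime set $\{2\}\cup P$, where $P=\{p_1,\ldots,p_k\}$ is the set of odd prime divisors of $m$. Pairing each subset $S\subseteq P$ with $S\cup\{2\}$ turns $\lambda$ into $\kappa$. This gives
\[
L=\varphi(n)-\mu(n)\,\mathcal G(P),
\]
so both the initial rounding and each peeling step reduce to two-sided bounds on $\mathcal G(P)$ and on differences $\mathcal G(P_j)-\mathcal G(P)$. Since $n>2$ is squarefree, $m\ge3$ and $P$ is nonempty. All exponents appearing in $\mathcal G$ are odd, and we use the trivial comparison $0<\kappa(t)\le\lambda(t)$ throughout.

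\emph{Initial rounding.} The empty subset contributes $\kappa(1)=\log_2(3/2)\approx0.585$. Every other subset product is a distinct odd integer at least $3$. By \eqref{eq:lambda-tail} with $t=1$, $s=2$, and using $\kappa\le\lambda$,
\[
|\mathcal G(P)-\kappa(1)|\le\sum_{j\ge1}\lambda(1+2j)\le\tfrac13\lambda(1)=\tfrac13.
\]
This crude bound only gives $\mathcal G(P)\in(0.25,0.92)$, but that already lies strictly inside $(0,1)$. Alternatively, starting the tail at $t=3$ gives the sharper bound $\tfrac43\lambda(3)$, placing $\mathcal G(P)$ in roughly $(0.52,0.65)$. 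Since $\varphi(n)$ is even for $n>2$, the real number $L$ lies at distance less than $1$ from the even integer $\varphi(n)$, on the side determined by $\mu(n)$. Hence $\EvenRound(L)=\varphi(n)$ and $\operatorname{sgn}(M-L)=\mu(n)$, which proves \eqref{eq:even-initial}. As a byproduct, $|M-L|=\mathcal G(P)$.

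\emph{Peeling.} The quantity inside $\Scale$ in \eqref{eq:even-peeling} is
\[
R=\mathcal G(P_j)-\mathcal G(P)=-\sum_{S\not\subseteq P_j}(-1)^{|S|}\kappa\Bigl(\prod_{p\in S}p\Bigr).
\]
By unique factorization, the subset products are distinct odd integers. The smallest is $p:=p_{j+1}\ge3$, coming from a singleton, so it enters with coefficient $+1$. All other products are at least $p+2$. Therefore \eqref{eq:lambda-tail} with $s=2$ gives
\[
|R-\kappa(p)|\le\tfrac13\lambda(p).
\]
Now compare $\kappa$ with $2^{-p}$. Since $(\log2)\kappa(p)=\log(1+2^{-p})$, we have
\[
2^{-p}\bigl(1-2^{-p-1}\bigr)<(\log2)\kappa(p)<2^{-p},
\]
and \eqref{eq:lambda-basic} bounds $(\log2)\lambda(p)$ by $2^{-p}/(1-2^{-p})$. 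For $p\ge3$ these combine to
\[
\bigl(1-\tfrac1{16}-\tfrac13\cdot\tfrac87\bigr)2^{-p}<(\log2)R<\bigl(1+\tfrac13\cdot\tfrac87\bigr)2^{-p},
\]
that is, roughly $0.556\cdot2^{-p}<(\log2)R<1.381\cdot2^{-p}$. In particular $R>0$, and
\[
p-\log_2(1.381)<\Scale(R)<p+\log_2(1/0.556).
\]
Both offsets are strictly less than $1$, so $\Scale(R)$ lies strictly between $p-1$ and $p+1$. Since $p$ is odd, $\OddRound(\Scale(R))=p$. The stopping rule is immediate, because $m$ is the product of its odd prime divisors.

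The main obstacle is this last estimate at $p=3$. Unlike the $\lambda$ case, $\kappa(p)$ sits slightly \emph{below} $2^{-p}/\log 2$, while the tail bound is phrased through the larger $\lambda(p)$. So the lower bound on $R$ must be checked numerically to stay above $2^{-p-1}/\log2$. If the constant had come out too tight, I would first try bounding the tail directly by $\sum_{j\ge1}\kappa(p+2j)$, redoing the series argument of \cref{lem:scale-separation} for $\kappa$, instead of passing through $\lambda$.
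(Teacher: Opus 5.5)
Your proof is correct and is essentially the paper's argument. You use the same pairing identity $L=\varphi(n)-\mu(n)\mathcal G(P)$, the parity of $\varphi(n)$ for even rounding, and a dominant singleton term $\kappa(p_{j+1})$ against an odd-exponent tail; the one difference is that you bound the tail via $\kappa\le\lambda$ and \eqref{eq:lambda-tail}, getting constants $187/336$ and $29/21$, whereas the paper sums $(\log 2)\kappa(t)<2^{-t}$ directly and gets $5/9$ and $4/3$.

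Your aside about the sharper initial bound is numerically wrong and should be dropped. Since $\frac43\lambda(3)\approx0.257$, it places $\mathcal G(P)$ only in about $(0.33,0.84)$, not $(0.52,0.65)$; for example $\mathcal G(\{3\})=\log_2(4/3)\approx0.415$. Your main line uses the valid crude bound instead, so it is unaffected.
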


\begin{proof}
Put $P=\{p_1,\ldots,p_k\}$. Splitting the subsets in
\eqref{eq:fingerprint} according to whether they contain $2$, and applying
\cref{prop:log-master}, gives
\[
L=\varphi(n)-\mu(n)\mathcal G(P).
\]
Every nonempty subset product in \eqref{eq:even-fingerprint} is a distinct
odd integer at least $3$. Since
$(\log 2)\kappa(t)=\log(1+2^{-t})<2^{-t}$,
\[
\left|\mathcal G(P)-\log_2(3/2)\right|
\le\sum_{j\ge0}\kappa(3+2j)
<\frac{1}{6\log 2}.
\]
The resulting interval lies in $(0,1)$: both $\log(3/2)$ and
$\log(4/3)$ exceed $1/6$. Hence $0<\mathcal G(P)<1$.
Since $\varphi(n)$ is even for $n>2$, this proves
\eqref{eq:even-initial} and $|M-L|=\mathcal G(P)$.

For the peeling step put $q=p_{j+1}$ and
$R=\mathcal G(P_j)-\mathcal G(P)$. Subtraction cancels the terms supported
on $P_j$. Unique factorization makes the remaining subset products
distinct; the smallest is $q$, with coefficient $+1$. Consequently,
\[
R=\kappa(q)+
\sum_{\substack{d\ge q+2\\d\text{ odd}}}\varepsilon_d\kappa(d),
\qquad \varepsilon_d\in\{-1,0,1\},
\]
where only finitely many coefficients are nonzero. Let $x=2^{-q}\le1/8$.
The tail has absolute value less than $x/(3\log 2)$, while
\[
\frac{x}{1+x}<\log(1+x)<x.
\]
It follows that
\begin{equation}
\label{eq:even-residual-bounds}
\frac59\,2^{-q}<(\log 2)R<\frac43\,2^{-q},
\end{equation}
and therefore
\[
q-\log_2(4/3)<\Scale(R)<q+\log_2(9/5).
\]
This interval lies strictly inside $(q-1,q+1)$, which proves
\eqref{eq:even-peeling}.
\end{proof}

For example, $\Phi_{10}(2)=11$ and
$\mathcal G(\{5\})=\log_2(3/2)-\log_2(33/32)$.
Ordinary rounding sends $\log_2 11$ to $3$, whereas even rounding returns
$\varphi(10)=4$. The first peeling residual is $\kappa(5)$, whose scale
returns the missing odd factor $5$.

Together, the three cases give logarithmic extraction formulas for every
given index $n>2$. The remaining index $n=2$ is elementary, but its logarithmic value
cannot be distinguished from that of $n=6$:
$\Phi_2(2)=\Phi_6(2)=3$.

\subsection{Prime sets and required precision}

The factorial and interval indices used in the local theory also fall into
the real reconstruction regimes. Factorial indices are nonsquarefree once
the argument is at least $4$; the interval products $R_n$ are odd and
squarefree for $n\ge2$. Substitution therefore gives real counterparts of
the earlier primorial and successor identities.

\begin{corollary}[Prime recovery from one factorial value]
\label{cor:factorial-log}
Let $m\ge4$. The single exact real number $\log_2\Phi_{m!}(2)$, together with
$m$, determines every prime at most $m$. Equation
\eqref{eq:arch-primorial} first determines their product, and
\eqref{eq:nonsquare-peeling} then returns the primes in increasing order. The
process stops when the product of the recovered primes equals the value given
by \eqref{eq:arch-primorial}.
\end{corollary}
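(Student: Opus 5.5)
The plan is to verify that $m!$ lies in the nonsquarefree regime and then feed the two earlier Archimedean results, \cref{thm:arch-power} and \cref{thm:nonsquare-peeling}, with $n=m!$. First, since $m\ge4$, we have $4\mid m!$, so $m!$ is nonsquarefree. Its radical is $\rad(m!)=\prod_{p\le m}p$, because the primes dividing $m!$ are exactly those at most $m$. Consequently $a=m!/\rad(m!)\ge2$, and the hypotheses of both theorems hold with $L=\log_2\Phi_{m!}(2)$.

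Next, I would recover the product of the primes. From the single real number $L$ one computes $M=\round(L)$, which equals $\varphi(m!)$ by \eqref{eq:nonsquare-initial}. Then $a=\round(\Scale(|L-M|))$ by \eqref{eq:nonsquare-a}. This is exactly \cref{cor:arch-primorial}. Knowing $m$ (hence $m!$) and $a$, the quotient $m!/a$ gives $\rad(m!)=\prod_{p\le m}p$ via \eqref{eq:arch-primorial}.

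Then I would peel off the primes in order. With $a$ now known, \eqref{eq:nonsquare-peeling} applies with $n=m!$ and the prime-divisor list $p_1<\cdots<p_k$, which is the list of all primes $\le m$. Starting from $j=0$ with the empty set, each step uses only $L$, $a$ and the primes already found to compute $\Fingerprint_a(\{p_1,\ldots,p_j\})$. It then returns $p_{j+1}$. By induction on $j$, every prime at most $m$ is produced, in increasing order.

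The only point needing care is the stopping rule, that is, that the process halts exactly after $p_k$ and never earlier. The primes are distinct and are returned in increasing order, so the product $p_1\cdots p_j$ is a proper divisor of $\rad(m!)$ for $j<k$. It equals $\rad(m!)$ precisely when $j=k$, and $\rad(m!)$ is the value already determined by \eqref{eq:arch-primorial}. I expect no real obstacle here. The substantive estimates were done in \cref{thm:arch-power,thm:nonsquare-peeling}, and this corollary only checks nonsquarefreeness for $m\ge4$ and identifies the radical of $m!$.
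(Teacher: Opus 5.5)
Your proposal is correct and matches the paper's proof, which simply applies \cref{cor:arch-primorial} and \cref{thm:nonsquare-peeling} to $m!$. Your checks that $4\mid m!$ makes the index nonsquarefree and that the stopping rule triggers exactly at $j=k$ spell out details the paper leaves implicit.
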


\begin{proof}
Apply \cref{cor:arch-primorial,thm:nonsquare-peeling} to $m!$.
\end{proof}

\begin{corollary}[Archimedean successor-prime identity]
\label{cor:arch-next}
For $n\ge2$, put
\[
L=\log_2\Phi_{R_n}(2),
\qquad
M=\round(L).
\]
Then
\begin{equation}
\label{eq:arch-next}
\NextPrime(n)
=
\OddRound\!\left(\Scale\!\left(|L-M|\right)\right).
\end{equation}
\end{corollary}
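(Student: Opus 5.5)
The plan is to reduce the statement to \cref{thm:odd-squarefree}, so the only work is checking that $R_n$ meets its hypotheses for every $n\ge2$. By \cref{lem:Rn}, $R_n=\prod_{n<p\le2n}p$. Bertrand's theorem makes the product nonempty, so $R_n>1$. It is a product of distinct primes, hence squarefree. Every prime in $(n,2n]$ exceeds $n\ge2$, so $2\nmid R_n$ and $R_n$ is odd. This explains the restriction $n\ge2$: for $n=1$ we get $R_1=2$, which falls outside the odd regime.

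With these hypotheses, \cref{thm:odd-squarefree} applied to $R_n$, with $L=\log_2\Phi_{R_n}(2)$ and $M=\round(L)$, gives
\[
\lpf(R_n)=\OddRound\!\left(\Scale\!\left(|L-M|\right)\right).
\]
The rounding $M$ is unambiguous because the theorem places the fractional correction strictly inside $(0,1/2)$. The absolute value $|L-M|$ agrees with the $|L-\round(L)|$ appearing in \eqref{eq:odd-lpf}. The least prime factor of $\prod_{n<p\le2n}p$ is the least prime exceeding $n$, so $\lpf(R_n)=\NextPrime(n)$, which is \eqref{eq:arch-next}.

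There is no real obstacle. The only step needing care is that oddness of $R_n$ must hold for all $n\ge2$, including small cases such as $n=2$, where $R_2=3$, and $n=3$, where $R_3=5$. This follows directly from every prime factor being larger than $n\ge2$. The remark about the odd regime in the text preceding \cref{cor:factorial-log} already anticipates this point.
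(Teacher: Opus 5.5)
Your proposal is correct and matches the paper's proof: \cref{lem:Rn}, with Bertrand's theorem, shows that $R_n$ is an odd squarefree integer greater than $1$ whose least prime factor is $\NextPrime(n)$, and \cref{thm:odd-squarefree} then gives \eqref{eq:arch-next}. Your verification of oddness for $n\ge2$ and the remark that $R_1=2$ falls outside the odd regime fill in the small details that the paper leaves implicit.
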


\begin{proof}
By \cref{lem:Rn}, $R_n$ is odd and squarefree for $n\ge2$, and its least
prime factor is $\NextPrime(n)$. Apply \cref{thm:odd-squarefree}.
\end{proof}

\begin{remark}[Precision]
\label{rem:precision}
The extraction formulas use exact real values. Recovering a prime $p$
requires resolving a residual of order $2^{-t}$, where $t=ap$ in the
nonsquarefree case and $t=p$ in either squarefree case. Thus the required
number of fractional binary digits grows with $t$, with additional guard
bits for arithmetic error. Representing $L$ also requires
$O(\log\varphi(n))$ bits for its integer part.

For an exact prime-peeling residual $R$, all three cases satisfy
\[
\frac59<(\log 2)R\,2^t<\frac{32}{21}.
\]
Consequently, a sufficient error tolerance valid in every case is
\[
|\widetilde R-R|<\frac{2^{-t}}{18\log 2}.
\]
This keeps $(\log 2)\widetilde R\,2^t$ strictly between $1/2$ and $2$, so
$|\Scale(\widetilde R)-t|<1$ and the factor-rounding decision is unchanged.
The condition concerns the total residual error, including all subtractions,
and assumes that the initial rounding and preceding prime recoveries were
correct. Direct fingerprint evaluation may also involve exponentially many
subsets. The formulas are reconstruction identities rather than efficient
factorization algorithms.
\end{remark}

\section{The infinite limit and its Mellin family}
\label{sec:infinite-prime-constant}

The proof of odd squarefree peeling uses the least missing prime and a
bound over all larger odd exponents. It does not depend on how many prime
factors remain. We can therefore ask whether the same rule still works when
the finite support is replaced by the set of all odd primes.

There are two points to check: the fingerprints must have a limit, and the
infinite tail must leave the same rounding margin as a finite tail. Absolute
convergence handles the first; the estimate already used for finite peeling
handles the second. Define
\begin{equation}
\label{eq:Omega-def}
\Omega
:=
-\sum_{\substack{d>1\\d\text{ odd}}}\mu(d)\lambda(d)
=
0.25061403238015047217999\ldots.
\end{equation}

Let $3=q_1<q_2<\cdots$ be the odd primes and put
\[
Q_k=\prod_{j=1}^k q_j.
\]

\begin{proposition}[Convergence, product, and normalized cyclotomic limits]
\label{prop:Omega}
The series in \eqref{eq:Omega-def} converges absolutely, and
\begin{equation}
\label{eq:Omega-product}
2^\Omega
=
\prod_{\substack{d>1\\d\text{ odd}}}(1-2^{-d})^{\mu(d)}.
\end{equation}
Moreover,
\begin{equation}
\label{eq:Omega-limit}
\Omega
=
1+
\lim_{k\to\infty}(-1)^k
\left(
\log_2\Phi_{Q_k}(2)-\varphi(Q_k)
\right).
\end{equation}
Equivalently,
\begin{equation}
\label{eq:Omega-normalized-limit}
2^\Omega
=
2\lim_{k\to\infty}
\left(
\frac{\Phi_{Q_k}(2)}{2^{\varphi(Q_k)}}
\right)^{(-1)^k},
\end{equation}
and
\begin{equation}
\label{eq:Omega-half-limit}
2^\Omega
=
2\lim_{k\to\infty}
\Phi_{Q_k}\!\left(\frac12\right)^{(-1)^k}.
\end{equation}
\end{proposition}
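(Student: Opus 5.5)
Absolute convergence comes first, using \eqref{eq:lambda-basic}. For odd $d\ge3$ we have $(\log2)\lambda(d)<2^{-d}/(1-2^{-d})\le\tfrac87 2^{-d}$. Since $|\mu(d)|\le1$, the series in \eqref{eq:Omega-def} is dominated by $\tfrac{8}{7\log2}\sum_{d\ge3}2^{-d}$, so it converges absolutely. Because $\lambda(d)=-\log_2(1-2^{-d})$, we get $2^{-\mu(d)\lambda(d)}=(1-2^{-d})^{\mu(d)}$. Exponentiating the absolutely convergent sum term by term, with partial sums ordered by $d$ and continuity of $x\mapsto2^x$, turns the sum into the infinite product and gives \eqref{eq:Omega-product}. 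That product also converges absolutely, since $\sum|\log(1-2^{-d})|<\infty$.

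For the cyclotomic limit, apply \cref{prop:log-master} with $n=Q_k$, $a=1$ and $P=\{q_1,\dots,q_k\}$. Since $\mu(Q_k)=(-1)^k$, this gives
\[
(-1)^k\bigl(\log_2\Phi_{Q_k}(2)-\varphi(Q_k)\bigr)=-\Fingerprint_1(P)=-\sum_{d\mid Q_k}\mu(d)\lambda(d),
\]
using the bijection between subsets $S$ and squarefree divisors $d=\prod_{p\in S}p$, for which $(-1)^{|S|}=\mu(d)$. The $d=1$ term is $\lambda(1)=1$, so the right side equals
\[
-1-\sum_{\substack{d\mid Q_k\\d>1}}\mu(d)\lambda(d).
\]
The divisors $d>1$ of $Q_k$ are exactly the odd squarefree $d>1$ whose prime factors are all at most $q_k$. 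This set increases to all odd squarefree $d>1$. Non-squarefree $d$ contribute nothing because $\mu(d)=0$, and $\lambda(1)=1$ accounts for the summand $1$. Dominated convergence for series, with dominating sequence $\lambda(d)$ from the first step, then gives convergence to $-1+\Omega$, which is \eqref{eq:Omega-limit}. A quantitative version follows from \eqref{eq:lambda-tail}: every omitted $d$ is odd and at least $q_{k+1}$, so the error is at most $\sum_{j\ge0}\lambda(q_{k+1}+2j)\le\tfrac43\lambda(q_{k+1})$.

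The two equivalent forms follow by exponentiation. Raising $2$ to both sides of \eqref{eq:Omega-limit} and using continuity of $2^x$ gives \eqref{eq:Omega-normalized-limit}, since $2^{(-1)^k(\log_2\Phi-\varphi)}=(\Phi/2^{\varphi})^{(-1)^k}$. For \eqref{eq:Omega-half-limit}, use that $\Phi_n$ is self-reciprocal for $n>1$: $X^{\varphi(n)}\Phi_n(1/X)=\Phi_n(X)$. This follows from \eqref{eq:mobius-product}, since $\sum_{d\mid n}\mu(n/d)=0$ and $\sum_{d\mid n}d\,\mu(n/d)=\varphi(n)$, which gives $\Phi_n(2)/2^{\varphi(n)}=\Phi_n(1/2)$. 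The only real obstacle is making the divisor-set bookkeeping in the second step precise: identifying $\Fingerprint_1$ over the first $k$ odd primes with a partial sum of the series, and justifying the passage to the limit. Both are handled by the absolute convergence established at the start.
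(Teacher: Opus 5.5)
Your proof is correct and follows the paper's route: \eqref{eq:lambda-basic} gives absolute convergence, \cref{prop:log-master} applied to $Q_k$ turns the normalized logarithm into a partial sum of the series, absolute convergence justifies the limit, and cyclotomic reciprocity handles the value at $1/2$. Your additional details are also correct: you derive reciprocity from \eqref{eq:mobius-product}, and your $\tfrac43\lambda(q_{k+1})$ error bound is one the paper defers to \cref{prop:Omega-error}.
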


\begin{proof}
By \eqref{eq:lambda-basic}, $\lambda(d)\ll2^{-d}$, proving absolute
convergence. Since $-\lambda(d)=\log_2(1-2^{-d})$, exponentiation gives
\eqref{eq:Omega-product}.

For $Q_k$, \cref{prop:log-master} gives
\[
1+(-1)^k
\left(
\log_2\Phi_{Q_k}(2)-\varphi(Q_k)
\right)
=
-\sum_{\substack{d\mid Q_k\\d>1}}\mu(d)\lambda(d).
\]
Absolute convergence permits passage to the limit and proves
\eqref{eq:Omega-limit}. Taking base-$2$ exponentials gives
\[
\lim_{k\to\infty}
\left(
\frac{\Phi_{Q_k}(2)}{2^{\varphi(Q_k)}}
\right)^{(-1)^k}
=
2^{\Omega-1},
\]
which proves \eqref{eq:Omega-normalized-limit}. Finally, cyclotomic
reciprocity gives
\[
\Phi_m(2)=2^{\varphi(m)}\Phi_m\!\left(\frac12\right)
\qquad(m>1),
\]
and \eqref{eq:Omega-half-limit} follows.
\end{proof}

Cyclotomic reciprocity explains the appearance of $1/2$: dividing the value
at $2$ by $2^{\varphi(m)}$ gives the reciprocal evaluation exactly. The
factor $(-1)^k$ compensates for the alternating M\"obius sign of the odd
primorials. Thus the normalization selects the same logarithmic correction
throughout the sequence. We next quantify its convergence before decoding
the limit.

\begin{proposition}[Certified approximation to the prime constant]
\label{prop:Omega-error}
For odd $D\ge3$, put
\[
\Omega_{\le D}:=-\sum_{\substack{3\le d\le D\\d\text{ odd}}}
\mu(d)\lambda(d).
\]
Then
\begin{equation}
\label{eq:Omega-truncation}
|\Omega-\Omega_{\le D}|\le\frac43\lambda(D+2).
\end{equation}
The odd-primorial approximants
\[
\Omega_k:=1-\Fingerprint_1(\{q_1,\ldots,q_k\}),
\qquad \Omega_0:=0,
\]
increase strictly to $\Omega$, and their error satisfies
\begin{equation}
\label{eq:Omega-primorial-error}
\frac23\lambda(q_{k+1})
\le\Omega-\Omega_k
\le\frac43\lambda(q_{k+1})
\qquad(k\ge0).
\end{equation}
\end{proposition}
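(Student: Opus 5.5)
The plan is to reduce both estimates to the scale-separation bound \eqref{eq:lambda-tail} with spacing $s=2$, exactly as in the proof of \cref{lem:odd-scale}. The one new ingredient is that the finite tails are now replaced by infinite ones; absolute convergence from \cref{prop:Omega} makes this harmless.

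\textbf{Truncation bound.} By definition,
\[
\Omega-\Omega_{\le D}=-\sum_{\substack{d\ge D+2\\d\text{ odd}}}\mu(d)\lambda(d).
\]
Every term has absolute value at most $\lambda(d)$. Taking $t=D+2$ and $s=2$, the odd $d\ge D+2$ are $t$ and $t+2j$ for $j\ge1$. Then \eqref{eq:lambda-tail} gives
\[
|\Omega-\Omega_{\le D}|\le\lambda(t)+\frac{\lambda(t)}3=\frac43\lambda(D+2),
\]
which is \eqref{eq:Omega-truncation}.

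\textbf{Primorial approximants.} By \eqref{eq:fingerprint} with $a=1$, and the identification of subsets of $\{q_1,\ldots,q_k\}$ with divisors of $Q_k$ (with $(-1)^{|S|}=\mu(d)$), we get
\[
\Omega_k=-\sum_{d\mid Q_k,\,d>1}\mu(d)\lambda(d).
\]
The empty subset contributes $\lambda(1)=1$, which cancels the leading $1$. This formula also holds for $k=0$ as an empty sum. Subtracting it from \eqref{eq:Omega-def}, which is legitimate by absolute convergence, yields
\[
\Omega-\Omega_k=-\sum_{d}\mu(d)\lambda(d),
\]
where the sum runs over odd squarefree $d>1$ having a prime factor at least $q_{k+1}$. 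The smallest such $d$ is $q_{k+1}$ itself, with coefficient $-\mu(q_{k+1})=+1$. Every other such $d$ is odd and at least $q_{k+1}+2$, and distinct values of $d$ give distinct terms.

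Applying \eqref{eq:lambda-tail} with $t=q_{k+1}$ and $s=2$ bounds the remaining terms in absolute value by $\lambda(q_{k+1})/3$. This gives \eqref{eq:Omega-primorial-error} for all $k\ge0$. For $k=0$ the error is the whole of $\Omega$ and $q_1=3$; the bound then reads $\tfrac23\lambda(3)\le\Omega\le\tfrac43\lambda(3)$, which is consistent with the numerical value. In particular $\Omega-\Omega_k\to0$, since $\lambda(q_{k+1})\to0$.

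\textbf{Strict monotonicity.} This is the main point needing care, because the error bounds alone do not order consecutive approximants. I would compute the increment $\Omega_{k+1}-\Omega_k$ directly rather than compare the two error estimates. Adjoining $q=q_{k+1}$ adds exactly the divisors $qd$ with $d\mid Q_k$, so
\[
\Omega_{k+1}-\Omega_k=\sum_{d\mid Q_k}\mu(d)\lambda(qd).
\]
This is $\mathcal F_q(\{q_1,\ldots,q_k\})$ in the notation of \eqref{eq:fingerprint}. Since $q\ge3\ge2$, \cref{lem:fingerprint-positive} applies with $a=q$ and shows the increment is strictly positive.

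Alternatively, the increment is a finite sum led by $\lambda(q)$, with all other exponents odd and at least $3q$. By \eqref{eq:lambda-tail} with spacing $2q$, those terms total at most $\lambda(q)/(2^{2q}-1)$, which again gives positivity. Combined with the convergence already shown, the sequence $\Omega_k$ increases strictly to $\Omega$.
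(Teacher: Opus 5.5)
Your proposal is correct and follows essentially the same route as the paper. Both bounds come from \eqref{eq:lambda-tail} with spacing $s=2$ after isolating the leading term $\lambda(D+2)$ or $\lambda(q_{k+1})$, and strict monotonicity comes from identifying the increment as $\Fingerprint_{q_{k+1}}(\{q_1,\ldots,q_k\})$ and applying \cref{lem:fingerprint-positive}; your alternative spacing-$2q$ positivity check is also valid but not needed.
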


\begin{proof}
The omitted terms in $\Omega_{\le D}$ have distinct odd indices at least $D+2$.
By \cref{lem:scale-separation},
\[
|\Omega-\Omega_{\le D}|
\le\sum_{j\ge0}\lambda(D+2+2j)
\le\frac43\lambda(D+2).
\]
For $\Omega_k$, absolute convergence permits cancellation of every term
supported on $\{q_1,\ldots,q_k\}$. The smallest remaining index is
$q_{k+1}$, with positive coefficient, and every other remaining index is an
odd integer at least $q_{k+1}+2$. The same tail estimate gives
\eqref{eq:Omega-primorial-error}. Finally, the subset definition gives
\[
\Omega_{k+1}-\Omega_k
=\Fingerprint_{q_{k+1}}(\{q_1,\ldots,q_k\})>0,
\]
where positivity follows from \cref{lem:fingerprint-positive}.
\end{proof}

Taking $D=101$ in \eqref{eq:Omega-truncation} gives an error smaller than
$1.90\times10^{-31}$. Thus the displayed digits of $\Omega$ can be checked
by evaluating a short finite sum with adequate arithmetic precision.
Unlike truncation by the integer cutoff $D$, truncation by prime support has
the monotonicity in \eqref{eq:Omega-primorial-error}: each new prime makes a
positive correction, and the first omitted prime sets the error scale.

\begin{theorem}[Infinite prime peeling]
\label{thm:infinite-peeling}
With $q_1,q_2,\ldots$ as above, for every $j\ge0$,
\begin{equation}
\label{eq:infinite-peeling}
q_{j+1}
=
\OddRound\!\left(
\Scale\!\left(
\Omega-1+\Fingerprint_1(\{q_1,\ldots,q_j\})
\right)
\right),
\end{equation}
where the set is empty when $j=0$. Thus $\Omega$, together with the prime
$2$, recursively determines all primes.
\end{theorem}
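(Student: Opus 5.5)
The argument for \cref{thm:infinite-peeling} reuses the residual estimate of \cref{prop:Omega-error} together with the rounding argument from the proof of \cref{thm:odd-squarefree}.

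\emph{Identifying the residual.} By definition $\Omega_j=1-\Fingerprint_1(\{q_1,\ldots,q_j\})$, including $\Omega_0=0$, since the empty fingerprint is $\lambda(1)=1$. Hence the argument of $\Scale$ in \eqref{eq:infinite-peeling} is
\[
\Omega-1+\Fingerprint_1(\{q_1,\ldots,q_j\})=\Omega-\Omega_j .
\]
By \eqref{eq:Omega-primorial-error}, with $q=q_{j+1}$,
\[
\frac23\lambda(q)\le\Omega-\Omega_j\le\frac43\lambda(q),
\]
so the residual is positive and $\Scale$ is defined.

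\emph{Rounding the scale.} Combining these bounds with \eqref{eq:lambda-basic} gives
\[
q+\log_2(1-2^{-q})-\log_2(4/3)<\Scale(\Omega-\Omega_j)<q+\log_2(3/2).
\]
For $q\ge3$ the lower endpoint is at least $q+\log_2(7/8)-\log_2(4/3)=q+\log_2(21/32)>q-1$. The upper endpoint is less than $q+1$. The scale therefore lies strictly within distance $1$ of the odd integer $q$. Every other odd integer is at distance at least $2$ from $q$, so it is at distance more than $1$ from the scale. Thus $\OddRound$ is uniquely defined and returns $q_{j+1}$. This is the same computation as in the odd squarefree case.

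\emph{Recovering all primes.} Induction on $j$ shows that $\Omega$ determines $q_1,q_2,\ldots$ in order. Each step uses only $\Omega$ and the odd primes already recovered, which suffice to evaluate the finite fingerprint. Adjoining $2$ then gives all primes.

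\emph{Main obstacle.} The only nontrivial point is justifying the two-sided bound for the infinite residual. This is already supplied by \cref{prop:Omega-error}: absolute convergence allows the terms supported on $\{q_1,\ldots,q_j\}$ to cancel, and the remaining odd indices are distinct and at least $q+2$. If I had to redo this step, I would apply \eqref{eq:lambda-tail} with $s=2$ to the rearranged absolutely convergent series.
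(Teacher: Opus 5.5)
Your proposal is correct and follows the paper's own proof: identify the residual as $\Omega-\Omega_j$, invoke the two-sided bound of \cref{prop:Omega-error}, and reuse the strict rounding interval from \cref{thm:odd-squarefree}. The only difference is that you write out the endpoint check, via $\log_2(21/32)>-1$, where the paper simply cites the earlier theorem.
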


\begin{proof}
The residual in \eqref{eq:infinite-peeling} is exactly $\Omega-\Omega_j$.
By \cref{prop:Omega-error}, it lies between
$(2/3)\lambda(q_{j+1})$ and $(4/3)\lambda(q_{j+1})$.
The strict rounding interval established in
\cref{thm:odd-squarefree} therefore gives \eqref{eq:infinite-peeling}.
\end{proof}

The first steps show how the infinite decoder continues the finite one:
\[
\Scale(\Omega)=2.525\ldots,
\qquad
\Scale(\Omega-\lambda(3))=4.637\ldots,
\]
which return $3$ and $5$. Subtracting
$\lambda(3)+\lambda(5)-\lambda(15)$ gives a scale of
$6.884\ldots$, which returns $7$.

\subsection{The Mellin family behind the constant}

The constant $\Omega$ uses one fixed decay rate, $2^{-d}$. To understand
the role of that choice, replace it by $e^{-dt}$ and allow $t$ to vary.
This gives a family in which the integer $d$ acts by dilation of the real
variable. The Mellin transform is suited to this dependence: it converts
such dilations into powers $d^{-s}$.

For $t>0$, define
\begin{equation}
\label{eq:L-def}
\mathcal L(t)
:=
-\sum_{\substack{m\ge1\\m\text{ odd}}}
\mu(m)\log(1-e^{-mt}).
\end{equation}
At the original binary scale,
\begin{equation}
\label{eq:L-Omega}
\mathcal L(\log 2)=(1-\Omega)\log 2.
\end{equation}
For each fixed $t>0$, the defining series converges absolutely by geometric
decay. In the Mellin transform, the logarithmic series supplies
$\zeta(s+1)$ and the odd M\"obius sum supplies the reciprocal Euler product
below.

\begin{proposition}[Mellin transform of the odd M\"obius product]
\label{prop:Mellin}
For $\Re s>1$,
\begin{equation}
\label{eq:Mellin}
\int_0^\infty \mathcal L(t)t^{s-1}\,dt
=
\frac{\Gamma(s)\zeta(s+1)}
{\zeta(s)(1-2^{-s})}.
\end{equation}
The expression on the right gives a meromorphic continuation of the transform
to the complex plane.
\end{proposition}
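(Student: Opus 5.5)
Expand the logarithm and interchange sums. For $t>0$ and odd $m$, $-\log(1-e^{-mt})=\sum_{k\ge1}e^{-kmt}/k$, so
\[
\mathcal L(t)=\sum_{\substack{m\ge1\\m\text{ odd}}}\sum_{k\ge1}\frac{\mu(m)}{k}e^{-kmt}.
\]
Integrating termwise against $t^{s-1}$ uses $\int_0^\infty e^{-kmt}t^{s-1}\,dt=\Gamma(s)(km)^{-s}$ for $\Re s>0$. This gives
\[
\Gamma(s)\sum_{k\ge1}k^{-s-1}\sum_{m\text{ odd}}\mu(m)m^{-s}.
\]
The first sum is $\zeta(s+1)$. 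The odd M\"obius series equals the Euler product $\prod_{p>2}(1-p^{-s})$. This is $1/\zeta(s)$ with the factor at $p=2$ removed, namely $\bigl(\zeta(s)(1-2^{-s})\bigr)^{-1}$. The result is exactly \eqref{eq:Mellin}.

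The main obstacle is justifying the interchange of sum and integral, which I would do by Tonelli. Put $\sigma=\Re s$. The double series of absolute values integrates to
\[
\Gamma(\sigma)\sum_{k}k^{-\sigma-1}\sum_{m\text{ odd}}|\mu(m)|m^{-\sigma}\le\Gamma(\sigma)\,\zeta(\sigma+1)\,\zeta(\sigma).
\]
This is finite precisely when $\sigma>1$, which explains the hypothesis $\Re s>1$. Fubini--Tonelli then permits the termwise integration. The same bound shows that the Mellin integral of $\mathcal L$ itself converges absolutely in this half-plane. The factor $\mathcal L(t)$ behaves like $e^{-t}$ at infinity. Near $0$ it is controlled by the dominating series, so no separate endpoint analysis is needed.

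For the continuation claim, I would note that the right-hand side is a quotient of meromorphic functions on $\mathbb C$. The functions $\Gamma$, $\zeta(s+1)$ and $1/\zeta(s)$ are each meromorphic, since $\zeta$ is not identically zero. The factor $1/(1-2^{-s})$ is also meromorphic, with poles at $s=2\pi i k/\log 2$. Since this function agrees with the transform on $\Re s>1$, it furnishes the meromorphic continuation, unique by the identity theorem. I would not attempt to locate poles, since the statement asks only for the continuation.
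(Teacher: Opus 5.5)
Your proposal is correct and follows the paper's proof essentially step for step. Like the paper, you expand the logarithm and justify termwise integration by the Tonelli bound $\Gamma(\sigma)\zeta(\sigma+1)\sum_{m\text{ odd}}|\mu(m)|m^{-\sigma}<\infty$ for $\sigma>1$, identify the odd M\"obius series through its Euler product as $\bigl(\zeta(s)(1-2^{-s})\bigr)^{-1}$, and read the meromorphic continuation off the right-hand side.
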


\begin{proof}
Let $\sigma=\Re s>1$. The expansion
\[
-\log(1-e^{-mt})=\sum_{r\ge1}\frac{e^{-mrt}}r
\]
may be inserted into \eqref{eq:L-def} and integrated term by term, because
\begin{align*}
&\sum_{\substack{m\ge1\\m\text{ odd}}}|\mu(m)|
\sum_{r\ge1}\frac1r
\int_0^\infty e^{-mrt}t^{\sigma-1}\,dt\\
&\hspace{25mm}=
\Gamma(\sigma)
\left(\sum_{\substack{m\ge1\\m\text{ odd}}}
\frac{|\mu(m)|}{m^\sigma}\right)
\left(\sum_{r\ge1}\frac1{r^{\sigma+1}}\right)
<\infty.
\end{align*}
It follows that
\[
\int_0^\infty \mathcal L(t)t^{s-1}\,dt
=
\Gamma(s)\zeta(s+1)
\sum_{\substack{m\ge1\\m\text{ odd}}}\frac{\mu(m)}{m^s}.
\]
The odd M\"obius Dirichlet series has Euler product
\[
\sum_{\substack{m\ge1\\m\text{ odd}}}\frac{\mu(m)}{m^s}
=
\prod_{p\ne2}(1-p^{-s})
=
\frac1{\zeta(s)(1-2^{-s})},
\]
which proves \eqref{eq:Mellin}. Meromorphic continuation follows from the
right-hand side.
\end{proof}

\begin{remark}[Including the prime $2$]
\label{rem:all-prime-constant}
The all-prime analogue is
\[
\widetilde\Omega
:=-\sum_{d>1}\mu(d)\lambda(d)
=\Omega+\frac{\mathcal L(2\log 2)}{\log 2}
=0.64143346289118272006\ldots.
\]
To see the relation, separate the odd and even squarefree indices. Since
$\mu(2m)=-\mu(m)$ for odd $m$, the corresponding family is
\[
\mathcal L_{\mathrm{all}}(t)
:=-\sum_{m\ge1}\mu(m)\log(1-e^{-mt})
=\mathcal L(t)-\mathcal L(2t).
\]
Thus $\mathcal L_{\mathrm{all}}(\log 2)=(1-\widetilde\Omega)\log 2$, and
\cref{prop:Mellin} gives the simpler transform
\[
\int_0^\infty\mathcal L_{\mathrm{all}}(t)t^{s-1}\,dt
=\frac{\Gamma(s)\zeta(s+1)}{\zeta(s)}
\qquad(\Re s>1).
\]
We retain $\Omega$ as the main constant because it continues the finite
odd-squarefree decoder with the same kernel $\lambda$ and the same
nearest-odd rounding rule at every step. Including $2$ does not by itself
give a uniform initial step: ordinary rounding of
$\Scale(\widetilde\Omega)=1.169\ldots$ returns $1$. After taking $2$ as
the initial prime, the all-prime variant can instead be decoded using the
$\kappa$-fingerprint of \cref{sec:even-squarefree}; absolute convergence
and the bound \eqref{eq:even-residual-bounds} justify passage to the limit.
\end{remark}

\section{Concluding remarks}

The initial binary observation leads to two forms of explicit recovery.
The local formulas obtain the repeated-power quotient from
$\Phi_n(2)-1$ and the least prime factor of a squarefree index from
$\Phi_n(2)+1$. The real formulas instead inspect the discrepancy between
$\log_2\Phi_n(2)$ and its appropriate integer baseline. In both settings,
the M\"obius product identifies the first relevant divisor; the essential
additional work over the reals is to keep the remaining tail inside a strict
rounding interval.

This tail control also explains why finite prime recovery survives passage
to the odd-primorial limit. The next prime remains the dominant missing
contribution even when infinitely many later primes are present. The
resulting constant admits both a cyclotomic limit description and an explicit
recursive decoder, while its variable-scale version has the Mellin transform
\eqref{eq:Mellin}. These identities describe what the exact values retain.
The precision bounds specify the separate cost of resolving that information:
a prime at exponent $t$ requires a residual known to absolute accuracy on the
scale of $2^{-t}$.

\paragraph{AI assistance.}
The mathematical ideas are the author's. GPT~6 assisted with manuscript
preparation and the writing of proofs.

\small
\setlength{\bibsep}{0.25em}

\end{document}